\newtheorem{thm}{Theorem}[section]
\newtheorem{prop}[thm]{Proposition}
\newtheorem{lem}[thm]{Lemma}
\newtheorem{cor}[thm]{Corollary}
\newtheorem{conj}[thm]{Conjecture}
\theoremstyle{definition}
\newtheorem{defn}[thm]{Definition}
\newtheorem{example}[thm]{Example}
\newtheorem{rem}[thm]{Remark}
\newtheorem{lemma}[thm]{Lemma}
\theoremstyle{remark}
\def\cocoa{{\hbox{\rm C\kern-.13em o\kern-.07em C\kern-.13em o\kern-.15em A}}}
\newcommand{\skipit}[1]{{}}
\newcommand{\prfend}{\hbox to7pt{\hfil}%
    \par\vskip-\baselineskip\hbox to\hsize%
    {\hfil\vbox {\hrule width6pt height6pt}}\vskip\baselineskip}
\newcommand{\st}{\ensuremath{\colon\,}}                       
\newcommand{\ZZ}{\ensuremath{\mathbb{Z}}}                     
\newcommand{\fm}{\mathfrak m}
\DeclareMathOperator{\Tor}{Tor}
\DeclareMathOperator{\depth}{depth}
\DeclareMathOperator{\Char}{char}
\DeclareMathOperator{\link}{link}                             
\newcommand{\twodigit}[1]{\INTVAL=#1\relax\ifnum\INTVAL<10 0\fi\the\INTVAL}
\newcommand\rightnow{
    \twodigit{\the\HOUR}:\twodigit{\the\MINUTE},
    \twodigit{\number\day}.\space
    \ifcase\month\or January\or February\or March\or April\or May\or June\or July\or August\or September\or October\or November\or December\fi
    \space\number\year}
\begin{document}


\title[An algebraic approach to finite projective planes]{An algebraic approach to finite projective planes}
\subjclass[2010]{Primary: 05E40; Secondary: 05B25, 05E45, 13D02, 13E10, 13H10, 51E15.}
\keywords{Finite projective plane; linear space; Weak Lefschetz Property; Strong Lefschetz Property; minimal free resolution; monomial algebra; level algebra; Stanley-Reisner ring; inverse system.}

\author[D.\ Cook II]{David Cook II}
\address{Department of Mathematics \& Computer Science, Eastern Illinois University, Charleston, IL 61920}
\email{\href{mailto:dwcook@eiu.edu}{dwcook@eiu.edu}}

\author[J.\ Migliore]{Juan Migliore}
\address{Department of Mathematics, University of Notre Dame, Notre Dame, IN 46556}
\email{\href{mailto:migliore.1@nd.edu}{migliore.1@nd.edu}}

\author[U.\ Nagel]{Uwe Nagel}
\address{Department of Mathematics, University of Kentucky, Lexington, KY 40506}
\email{\href{mailto:uwe.nagel@uky.edu}{uwe.nagel@uky.edu}}

\author[F.\ Zanello]{Fabrizio Zanello}
\address{Department of Mathematical Sciences, Michigan Technological University, Houghton, MI 49931}
\email{\href{mailto:zanello@mtu.edu}{zanello@mtu.edu}}

\begin{abstract}
A finite projective plane, or more generally a finite linear space, has an associated incidence complex that gives rise to two natural algebras: the Stanley-Reisner ring $R/I_\Lambda$ and the inverse system algebra $R/I_\Delta$. We give a careful study of both of these algebras. Our main results are a full description of the graded Betti numbers of both algebras in the more general setting of linear spaces (giving the result for the projective planes as a special case), and a classification of the characteristics in which the inverse system algebra associated to a finite projective plane has the Weak or Strong Lefschetz Property.
\end{abstract}

\thanks{The work for this paper was done while J.\ Migliore was partially supported by the National Security Agency under Grant Number H98230-12-1-0204 and by a Simons Foundation grant (\#309556), 
while U. Nagel was partially supported by the National Security Agency under Grant
Number H98230-12-1-0247 and by the Simons Foundation under grant \#317096,  and while F.\ Zanello was partially supported by a Simons Foundation grant (\#274577).}

\maketitle


\section{Introduction}\label{sec:intro}

The purpose of this note is to introduce and begin discussing a new connection between commutative algebra and finite geometries, especially finite projective planes. Namely, given a projective plane of order $q$ (or more generally, a linear space), we will naturally associate two monomial algebras to it: one that comes from Macaulay's inverse systems, and the other from Stanley-Reisner theory, by viewing the plane as a simplicial complex. 

We refer the reader to standard texts such as \cite{MS,St} for the main definitions and facts of combinatorial commutative algebra, and to \cite{Ge,IK} for information on inverse systems, though we will only employ this latter theory in the context of monomial algebras, where it is much simpler. We refer to Moorhouse's course notes \cite{Mo} to recall the following geometric definitions.

\begin{defn}
A \emph{point-line incidence structure} is a pair $(P, L)$, where $P$ is a finite set of points and  $L$ a finite set of lines, equipped with a binary relation $I \subset P \times L$ such that $(p, \ell) \in I$ precisely when $p$ lies on $\ell$ (that is, $p$ is \emph{incident} to $\ell$). 

A point-line incidence structure $(P, L)$ is a \emph{linear space} if any two distinct points lie on exactly one common line and any line contains at least two points. 

If every line in $L$ has the same number of points, we say that $(P, L)$ is \emph{equipointed}; otherwise, $(P, L)$ is \emph{nonequipointed}.
\end{defn}

A finite projective plane is then a special case of a linear space.

\begin{defn}
A \emph{finite projective plane} is a linear space $(P,L)$ such that the following extra conditions are satisfied:
\begin{enumerate}
\item Any two distinct lines meet at (exactly) one point;
\item There exist three noncollinear points;
\item Every line contains at least three points.
\end{enumerate}
\end{defn}

It is well known that, for every finite projective plane $(P, L)$, there exists an integer $q$ such that
$\#P = \#L = q^2 + q + 1$, every point lies on exactly $q+1$ lines, and every line contains 
exactly $q+1$ points.  In this case, $(P, L)$ is said to have \emph{order} $q$.

Our paper is structured as follows. Extending a construction for projective planes, in the next section we introduce the incidence complex of a linear space $(P,L)$ and study its  Stanley-Reisner ring $R/I_{\Lambda}$, with an eye on the most interesting case of a finite projective plane. The main fact shown in Section \ref{sec:sr} is a complete characterization of the graded Betti numbers (a much stronger set of invariants than the Hilbert function) for any Stanley-Reisner ring $R/I_{\Lambda}$ (see Theorem \ref{thm:Betti lin space}). In particular, it will follow that, except in trivial situations, all of these rings have depth 2, and are therefore very far from being Cohen-Macaulay.

In Section \ref{sec:is} we  present our second algebraic approach, by associating to any given linear space $(P,L)$ a  naturally defined artinian algebra $R/I_{\Delta}$, which is constructed via (monomial) inverse systems. Since the ideal $I_{\Delta}$ can alternatively be obtained from the Stanley-Reisner ideal  $I_{\Lambda}$ by adding the squares of all the variables to the generating set, we  in part rely on the results of Section \ref{sec:sr} to develop this approach. Also for the algebras $R/I_{\Delta}$, we are able to provide a full characterization of the graded Betti numbers (see Theorem \ref{thm:Betti w squares}). We notice that, once again, our results carry a very neat statement when specialized to projective planes (see Corollary \ref{cor:Betti proj plane w squares}).

In Section \ref{sec:lefschetz}, we then turn to the \emph{Weak Lefschetz Property} (WLP) and the \emph{Strong Lefschetz Property} (SLP). Recall that an artinian $K$-algebra $A= \oplus_{i=0}^e A_i$ is said to have the WLP if, for a \emph{general} (according to the Zariski topology) linear form $\ell$, all of the multiplication maps $\times \ell$ between the $K$-vector spaces $A_i$ and $A_{i+1}$ have maximal rank (i.e., each is injective or surjective). Similarly, we say that $A$ has the SLP if, for all $i, d\geq 0$, the maps $\times \ell^d$ between $A_i$ and $A_{i+d}$ have maximal rank. 

The Lefschetz Properties, whose study was introduced by Richard Stanley in his work in combinatorial commutative algebra in the Seventies, can be seen as an algebraic abstraction of the Hard Lefschetz theorem of algebraic geometry, and their existence carries several important consequences (e.g., to name one in the combinatorial direction, if an algebra $A$ has the WLP, then its Hilbert function is unimodal; see \cite{HMNW} for basic facts on algebras with the WLP and SLP). In fact, a substantial amount of research has been done in recent years on the WLP and the SLP for \emph{monomial} algebras, which are the main object of this paper, and much of this work has been motivated by the surprising connections that have emerged with combinatorics, in particular with plane partitions and lattice paths. See, as a nonexhaustive list, \cite{CGJL, cook, 3, CN2, KV, LZ, MMN-mon}.

Notice that the existence of the Lefschetz Properties for any algebra $A$ of positive depth (over an infinite field) is a trivial problem, because $A$ is always guaranteed to have a linear nonzero divisor, which immediately implies the injectivity of all of the multiplication maps between its graded components. Hence, both Lefschetz Properties hold for the Stanley-Reisner ring $R/I_{\Lambda}$ associated to any linear spaces $(P, L)$. However, investigating the WLP and the SLP for the inverse system artinian algebras $R/I_{\Delta}$ turns out to be an interesting and highly nontrivial problem. This is the object of Section \ref{sec:lefschetz},  where we give a careful study of the characteristics of the base fields over which the algebra $R/I_{\Delta}$ has the WLP in both the equipointed and the nonequipointed cases. For the equipointed case we also study the SLP, and 
the main application  will again be to the case of finite projective planes. In particular, we will provide a classification of the characteristics of the base fields over which the algebra $R/I_{\Delta}$ corresponding to a projective plane has, respectively, the WLP and  the SLP (see Theorems \ref{main} and \ref{almost-equipointed}).

As we mentioned earlier, our chief goal in this paper is to begin a study of some interesting and potentially fruitful connections between commutative algebra and finite geometries. In general, the relationship between these two mathematical areas has only marginally been explored so far, and both the nature of the approaches outlined earlier and the neat statements of some of the results of the next sections strongly suggest that much more can be done in this line of research. In particular, we have only commenced to investigate the potential impact that certain algebraic tools  might have in studying problems of finite geometries. For example, a fundamental  problem in the theory of finite projective planes is to characterize the integers $q$ that may occur as orders of the planes.  Thus, one of the most natural and consequential questions that the interested reader may want to try to address in a subsequent work is: is it possible to use combinatorial commutative algebra to impose new, nontrivial restrictions on the possible values of $q$? It is widely believed that $q$ can only be a power of a prime number, but even the existence of a projective plane of order $12$ is still open. Perhaps the fact that the graded Betti numbers of the incidence complex of a projective plane are determined by the order of the plane  (see Corollary \ref{cor:Betti proj plane}) can serve as a starting point  for future investigations. 


\section{The Stanley-Reisner ring associated to a linear space}\label{sec:sr}

We begin by describing an incidence complex associated to a linear space.  We will prove that the Alexander dual of the incidence complex is vertex-decomposable. Therefore the Stanley-Reisner ring associated to the incidence complex has a $3$-linear resolution and has depth $2$.  Then, we will describe the graded Betti numbers of the Stanley-Reisner ring.
	
For the main definitions and some basic results on Stanley-Reisner theory (and monomial ideals in general), we refer to \cite{HH,MS,St}.

In order to study linear spaces, we define a simplicial complex, the incidence complex.

\begin{defn}
    The \emph{incidence complex} of a linear space $(P, L)$ is the simplicial complex
    $\Lambda$ on $P$ with facets given by the collection of points on each line in $L$.
\end{defn}

Clearly $\Lambda$ is \emph{pure} (i.e. the facets all have the same dimension) if and only if $(P, L)$ is equipointed. We make some further comments about the incidence complex.

\begin{prop}\label{pro:icprop}
    Let $\Lambda$ be the incidence complex of a linear space $(P, L)$.
    \begin{enumerate}
        \item The $f$-vector of $\Lambda$ is of the form
            \[
                f(\Lambda) = \left(1, \#P, \binom{\#P}{2}, \sum_{\ell \in L} \binom{\#\ell}{3}, \ldots, \sum_{\ell \in L} \binom{\#\ell}{t} \right)
            \]
            where $t = \max\{\#\ell \st \ell \in L\}$.
        \item The minimal nonfaces of $\Lambda$ are precisely the noncollinear triples of points.
        \item For $p \in P$, $\Lambda \setminus p$ is the incidence complex of $(P \setminus \{p\}, L')$, where $L'$
            is the set of lines in $L$ not containing $p$ together with the lines in $L$  containing at least three points,
            one of which is $p$, with $p$ removed.
        \item For $p \in P$, $\link_{\Lambda}(p)$ is a disjoint union of simplices.
    \end{enumerate}
\end{prop}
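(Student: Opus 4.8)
The plan is to derive all four statements directly from the single defining feature of a linear space: any two distinct points determine a \emph{unique} common line. The dictionary I will use repeatedly is that a subset $F \subseteq P$ is a face of $\Lambda$ exactly when its points are collinear, i.e.\ $F$ lies in the point set of some $\ell \in L$; moreover, as soon as $\#F \ge 2$, that containing line is unique, since any two points of $F$ already pin it down. In particular the facets of $\Lambda$ are precisely the point sets of the lines, because (using that each line has at least two points) no line's point set can sit inside another's.

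For part (1), I would count the faces in each dimension. A face of dimension $k-1$ is a $k$-subset of $P$ lying on a common line. When $k \le 1$ this is automatic, giving the entries $1$ and $\#P$. When $k = 2$, the linear space axiom guarantees that \emph{every} pair of points is collinear, so $f_1 = \binom{\#P}{2}$; this is exactly where the formula changes shape. When $k \ge 3$, a $k$-subset is a face iff it is collinear, and by uniqueness each such collinear set lies on exactly one line, so summing $\binom{\#\ell}{k}$ over $\ell \in L$ counts every face once with no overcounting. This yields the stated $f$-vector, the top index being $t = \max\{\#\ell \st \ell \in L\}$. Part (2) flows from the same analysis: every subset of size $\le 2$ is a face, so each minimal nonface has size $\ge 3$, and a triple is a nonface precisely when it is noncollinear, in which case all its proper subsets are faces, making it minimal. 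To rule out larger minimal nonfaces I would take a hypothetical minimal nonface $F$ with $\#F \ge 4$: every $3$-subset of $F$ would then be a face, hence collinear, and fixing two points of $F$ and invoking uniqueness of their common line forces all of $F$ onto that single line, contradicting that $F$ is a nonface.

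For part (3), the work splits into checking that $(P \setminus \{p\}, L')$ is again a linear space and that its incidence complex coincides with the deletion $\Lambda \setminus p$. The first is a direct verification: for distinct $a, b \in P \setminus \{p\}$ their unique line $\ell$ in $L$ either misses $p$, whence $\ell \in L'$, or contains $p$, whence $\ell$ has at least the three points $p, a, b$ and $\ell \setminus \{p\} \in L'$; uniqueness in $L'$ is inherited from $L$, and every line of $L'$ has at least two points by construction. For the complex identification I would match facets, since a complex is determined by them. The facets of $\Lambda \setminus p$ are the maximal collinear subsets of $P \setminus \{p\}$; running through $\ell \in L$, a line avoiding $p$ contributes its whole point set, a line through $p$ with at least three points contributes $\ell \setminus \{p\}$, and a line through $p$ with only two points contributes no line to $L'$, its surviving vertex being absorbed into another facet. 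These are exactly the point sets of the lines in $L'$. I expect this bookkeeping, especially isolating the two-point lines through $p$ that must be discarded, to be the main obstacle, though it is more a matter of care than of difficulty.

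Finally, for part (4), let $\ell_1, \dots, \ell_m$ be the lines of $L$ through $p$. A nonempty face $F$ of $\link_\Lambda(p)$ is a set with $p \notin F$ and $F \cup \{p\}$ collinear, i.e.\ $F \subseteq \ell_i \setminus \{p\}$ for some $i$. Uniqueness of lines shows this $i$ is unique: if a point $a \in F$ lay in both $\ell_i \setminus \{p\}$ and $\ell_j \setminus \{p\}$, then $a$ and $p$ would determine both $\ell_i$ and $\ell_j$, forcing $i = j$; the same argument shows the vertex sets $\ell_i \setminus \{p\}$ are pairwise disjoint. Hence $\link_\Lambda(p)$ is the union of the full simplices on these pairwise disjoint vertex sets, meeting only in the empty face, which is precisely a disjoint union of simplices.
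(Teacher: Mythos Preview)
Your proposal is correct and follows essentially the same approach as the paper, which dispatches (i)--(iii) as ``obvious'' and proves (iv) by the same partition argument you give: the lines through $p$ partition $P \setminus \{p\}$, and each part contributes a simplex of the link. Your write-up simply fills in the details the paper omits, including the careful bookkeeping in (iii) for two-point lines through $p$ and the explicit uniqueness argument ruling out minimal nonfaces of size $\ge 4$ in (ii).
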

\begin{proof}
    The first three are obvious.  The fourth follows since every pair of points lies on a unique line.  Thus the lines containing $p$
    partition the points of $P \setminus \{p\}$.  The parts of the partition, being on a common line, thus form the simplices of the link.
\end{proof}

Recall that the \emph{Alexander dual} of a simplicial complex $\Delta$ is the simplicial complex $\Delta^{\vee}$ of complements of
nonfaces of $\Delta$ (see, e.g., \cite[Section~1.5.3]{HH}).

\begin{cor}
    Let $\Lambda$ be the incidence complex of a linear space $(P, L)$, where $\#L>1$. The Alexander dual
    $\Lambda^{\vee}$ is pure and $(\#P - 4)$-dimensional.
\end{cor}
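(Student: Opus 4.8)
The plan is to pin down the facets of $\Lambda^{\vee}$ explicitly through the standard principle that Alexander duality interchanges minimal nonfaces with facets, and then to read off their common cardinality directly from Proposition \ref{pro:icprop}(2). First I would unwind the definition: a subset $F \subseteq P$ is a face of $\Lambda^{\vee}$ exactly when its complement $P \setminus F$ is a nonface of $\Lambda$. Because complementation reverses inclusions, maximal faces correspond to minimal nonfaces; concretely, $F$ is a facet of $\Lambda^{\vee}$ if and only if $P \setminus F$ is a minimal nonface of $\Lambda$. (Along the way one checks the harmless point that $\Lambda^{\vee}$ is genuinely downward closed, which is immediate since a superset of a nonface is again a nonface.)

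Next I would invoke Proposition \ref{pro:icprop}(2), which identifies the minimal nonfaces of $\Lambda$ as precisely the noncollinear triples of points, each of cardinality $3$. Combined with the previous step, this forces every facet of $\Lambda^{\vee}$ to have cardinality $\#P - 3$, hence dimension $(\#P - 3) - 1 = \#P - 4$. Since all facets then share this single dimension, $\Lambda^{\vee}$ is pure of dimension $\#P - 4$, as claimed. The dimension count itself is entirely routine once the facet description is in hand.

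The only genuine subtlety, and the place where the hypothesis $\#L > 1$ is used, is verifying that the statement is not vacuous, i.e.\ that at least one noncollinear triple exists so that $\Lambda^{\vee}$ actually has a facet. To see this I would pick two distinct lines $\ell_1 \neq \ell_2$, choose two points $a, b$ on $\ell_1$, and a point $c$ lying on $\ell_2$ but off $\ell_1$; such a $c$ exists because distinct lines of a linear space meet in at most one point while every line carries at least two points, so $\ell_2 \not\subseteq \ell_1$. The triple $\{a,b,c\}$ is noncollinear, since the unique line through $a$ and $b$ is $\ell_1$, which does not contain $c$. This produces a minimal nonface and hence a facet of $\Lambda^{\vee}$. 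I expect this nonemptiness check, together with the clean matching of minimal nonfaces to facets under duality, to be the only real content; everything else reduces to the triple description already supplied by Proposition \ref{pro:icprop}(2).
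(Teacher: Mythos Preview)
Your proof is correct and follows exactly the approach the paper intends: the corollary is stated without proof, as an immediate consequence of Proposition~\ref{pro:icprop}(ii) and the definition of Alexander duality, and your argument simply fills in those routine details. Your nonemptiness check using $\#L>1$ is the natural way to justify that the dimension statement is not vacuous.
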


Furthermore, Eagon and Reiner~\cite[Theorem~3]{ER} showed that  the Alexander dual $\Delta^{\vee}$ of the simplicial complex is Cohen-Macaulay
over the field $K$ if and only if the Stanley-Reisner ring $K[\Delta]$ has a linear resolution.  In the stronger situation that $\Delta^{\vee}$ is 
pure and shellable, Eagon and Reiner~\cite[Corollary~5]{ER} also showed that  $K[\Delta]$ has a linear resolution regardless of
the field $K$.

Following~\cite[Definition~2.1]{PB}, a pure complex $\Delta$ is \emph{vertex-decomposable} if either $\Delta$ is a simplex (including the void complex)
or there exists a vertex $v \in \Delta$, called a \emph{shedding vertex}, such that both $\link_{\Delta}{v}$ and $\Delta \setminus v$ are vertex-decomposable.
It is easy to see that vertex-decomposability implies shellability.

We state without proof a simple lemma about Alexander duality and subcomplexes.

\begin{lem}
    If $v$ is any vertex of a simplicial complex $\Delta$, then
    \[
        (\Delta \setminus v)^{\vee} = \link_{\Delta^{\vee}} v \text{~and~} (\link_{\Delta} v)^{\vee} = \Delta^{\vee} \setminus v.
    \]
\end{lem}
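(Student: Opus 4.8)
The plan is to unwind the definition of Alexander duality directly, being careful about the ambient vertex set. Write $V$ for the vertex set of $\Delta$ and fix the vertex $v$; set $W = V \setminus \{v\}$. Recall that in the complement-of-nonface formulation, $\sigma \in \Delta^{\vee}$ if and only if $V \setminus \sigma \notin \Delta$. The first point to nail down is that all four complexes appearing in the statement --- $\Delta \setminus v$, $\link_{\Delta} v$, $\link_{\Delta^{\vee}} v$, and $\Delta^{\vee} \setminus v$ --- live on the smaller vertex set $W$, so the two Alexander duals on the left-hand sides are taken with respect to $W$, not $V$. The whole argument then hinges on two elementary complementation identities, valid for any subset $G \subseteq W$: namely $V \setminus (G \cup \{v\}) = W \setminus G$ and $(W \setminus G) \cup \{v\} = V \setminus G$.

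For the first identity I would show that both sides equal $\{G \subseteq W \st W \setminus G \notin \Delta\}$. On the left, $G \in (\Delta \setminus v)^{\vee}$ means $W \setminus G \notin \Delta \setminus v$; since $W \setminus G$ already avoids $v$, membership in $\Delta \setminus v$ is the same as membership in $\Delta$, so this says exactly $W \setminus G \notin \Delta$. On the right, $G \in \link_{\Delta^{\vee}} v$ means $G \cup \{v\} \in \Delta^{\vee}$, i.e.\ $V \setminus (G \cup \{v\}) \notin \Delta$, which by the first complementation identity is again $W \setminus G \notin \Delta$. The two descriptions coincide, giving $(\Delta \setminus v)^{\vee} = \link_{\Delta^{\vee}} v$.

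For the second identity I would either repeat the analogous chase --- $G \in (\link_{\Delta} v)^{\vee}$ iff $(W \setminus G) \cup \{v\} \notin \Delta$ iff $V \setminus G \notin \Delta$ iff $G \in \Delta^{\vee} \setminus v$, now using the second complementation identity --- or, more economically, deduce it from the first. Applying the already-proved identity with $\Delta^{\vee}$ in place of $\Delta$ yields $(\Delta^{\vee} \setminus v)^{\vee} = \link_{\Delta^{\vee\vee}} v = \link_{\Delta} v$, since Alexander duality is an involution; taking duals once more gives $\Delta^{\vee} \setminus v = (\link_{\Delta} v)^{\vee}$.

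I do not expect any genuinely hard step here; the only real pitfall is bookkeeping the ambient vertex set correctly, since the Alexander dual depends on it, and silently conflating $V$ with $W$ would invalidate the complementation identities on which everything rests. Once those two identities are isolated, each equality collapses to a one-line comparison of set memberships.
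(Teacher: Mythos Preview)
Your proof is correct. The paper itself states this lemma without proof, calling it ``a simple lemma about Alexander duality and subcomplexes,'' so there is nothing to compare against; your direct verification by unwinding the membership conditions is exactly the kind of routine check the authors are implicitly deferring to the reader. Your explicit attention to the ambient vertex set (taking the duals on the left over $W = V \setminus \{v\}$ rather than $V$) is the one point where a careless reader could go astray, and you handle it cleanly.
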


\begin{thm}
    Let $\Lambda$ be the incidence complex of a linear space $(P, L)$. Then the Alexander dual $\Lambda^{\vee}$ is vertex-decomposable.
\end{thm}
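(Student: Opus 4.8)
The plan is to argue by induction on $\#P$, using the Alexander duality lemma above to convert the deletion and link of $\Lambda^{\vee}$ at a vertex into the link and deletion of $\Lambda$ itself, both of which are under control by Proposition~\ref{pro:icprop}. Concretely, I would fix an arbitrary point $p \in P$ (which, by the preceding Corollary, is a vertex of the pure $(\#P-4)$-dimensional complex $\Lambda^{\vee}$ once $\#L>1$) and show that $p$ is a shedding vertex, i.e.\ that both $\link_{\Lambda^{\vee}} p$ and $\Lambda^{\vee} \setminus p$ are vertex-decomposable of the expected dimensions. The base cases are the degenerate linear spaces: when $\#L \le 1$ the complex $\Lambda$ is a single simplex and $\Lambda^{\vee}$ is the void complex, which is vertex-decomposable since the definition explicitly counts the void complex as a simplex.

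For the link, the duality lemma gives $\link_{\Lambda^{\vee}} p = (\Lambda \setminus p)^{\vee}$. By Proposition~\ref{pro:icprop}(iii), $\Lambda \setminus p$ is again the incidence complex of a linear space, now on the smaller point set $P \setminus \{p\}$, so the inductive hypothesis immediately yields that $(\Lambda \setminus p)^{\vee}$ is vertex-decomposable; a quick count using the Corollary confirms its dimension is $\#P - 5$, one less than that of $\Lambda^{\vee}$, exactly as a shedding vertex requires.

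For the deletion, the duality lemma gives $\Lambda^{\vee} \setminus p = (\link_{\Lambda} p)^{\vee}$, and by Proposition~\ref{pro:icprop}(iv) the complex $\link_{\Lambda} p$ is a disjoint union of simplices (one simplex per line through $p$). Thus the crux of the argument is an auxiliary fact: the Alexander dual of a disjoint union of simplices is vertex-decomposable. I would prove this by a separate induction on the number of vertices. Writing the disjoint union as a union of full simplices on parts $V_1, \dots, V_k$, one computes directly that its dual has minimal nonfaces exactly the complements $V \setminus V_i$, so in particular it is pure. For $k \ge 2$, fixing a vertex $v \in V_i$ and applying the duality lemma once more, the link $\link_{(\cdot)^{\vee}} v$ equals the dual of the disjoint union with $v$ deleted, which is a smaller disjoint union of simplices handled by induction; meanwhile the deletion $(\cdot)^{\vee} \setminus v$ has the single minimal nonface $\bigcup_{j \ne i} V_j$ and so is the join of the boundary of a simplex with a full simplex, hence visibly vertex-decomposable. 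This establishes the auxiliary fact, and therefore that $\Lambda^{\vee} \setminus p$ is vertex-decomposable of dimension $\#P - 4$, matching that of $\Lambda^{\vee}$.

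The main obstacle I anticipate is not any single computation but the careful bookkeeping forced by the fact that Alexander duality depends on the ambient vertex set: one must consistently take all duals over the correct reduced vertex sets so that the identities of the duality lemma apply verbatim, and one must verify the purity and dimension conditions at each step to be sure the chosen vertex is genuinely a \emph{shedding} vertex rather than merely a vertex whose link and deletion happen to be vertex-decomposable. The auxiliary claim about disjoint unions of simplices is where the real content lies, and arranging its base cases (empty parts, a single part, and the void-complex convention) to mesh cleanly with the induction is the delicate point.
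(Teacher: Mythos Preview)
Your proposal is correct and follows essentially the same route as the paper: induction on $\#P$, use of the duality lemma to swap link and deletion, Proposition~\ref{pro:icprop}(iii) for the inductive step on $\link_{\Lambda^{\vee}} p$, and Proposition~\ref{pro:icprop}(iv) for $\Lambda^{\vee}\setminus p$. The only difference is that the paper dismisses the auxiliary claim (the Alexander dual of a disjoint union of simplices is vertex-decomposable) as ``easy to see'', whereas you actually carry it out; your computation of the minimal nonfaces of that dual and of its link and deletion at $v$ is correct, and your extra attention to purity and dimensions is more careful than strictly required by the paper's stated definition of shedding vertex.
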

\begin{proof}
    We proceed by induction on $\#P$.  If $\#P = 2$, then $\Lambda$ is a simplex and so $\Lambda^{\vee}$ is the void complex, 
    which is vertex-decomposable.

    Suppose $\#P > 2$, and let $p \in P$.  It is easy to see that $\Lambda^{\vee} \setminus p$ is vertex-decomposable, since it is the
    Alexander dual of the disjoint union of simplices $\link_{\Lambda} p$.  On the other hand, $\link_{\Lambda^{\vee}} p$ is the 
    Alexander dual of an incidence complex of  a linear space on $\#P-1$ vertices by Proposition~\ref{pro:icprop}(iii),
		which is vertex-decomposable by induction.    
\end{proof}

\begin{cor}
     \label{cor:3-lin}
    Let $\Lambda$ be the incidence complex of a linear space $(P, L)$.  Then the Stanley-Reisner ring $K[\Lambda]$ has
    a $3$-linear resolution.
\end{cor}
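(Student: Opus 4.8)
The plan is to assemble this from the Eagon--Reiner criterion together with the vertex-decomposability just established. First I would dispose of the degenerate case $\#L \le 1$: here any two points must lie on the single available line, so $\Lambda$ is a full simplex, $I_\Lambda = 0$, and the claim holds vacuously. So from now on assume $\#L > 1$.

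Next I would invoke the preceding results on $\Lambda^{\vee}$. The Theorem immediately above shows that $\Lambda^{\vee}$ is vertex-decomposable, and, as already observed, vertex-decomposability implies shellability; moreover the earlier Corollary records that $\Lambda^{\vee}$ is pure (indeed $(\#P-4)$-dimensional). Thus $\Lambda^{\vee}$ is \emph{pure and shellable}, so by Eagon--Reiner \cite[Corollary~5]{ER} the Stanley--Reisner ring $K[\Lambda]$ has a linear resolution over \emph{every} field $K$. It is important to route through the pure-and-shellable form \cite[Corollary~5]{ER} rather than the field-dependent \cite[Theorem~3]{ER}, precisely so as to obtain field-independence.

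It then remains to identify the linear resolution as specifically $3$-linear, that is, to check that $I_\Lambda$ is generated in degree $3$. This follows from Proposition~\ref{pro:icprop}(ii): the minimal nonfaces of $\Lambda$ are exactly the noncollinear triples of points, so the minimal generators of $I_\Lambda$ are precisely the squarefree cubic monomials corresponding to these triples. Since the generators all have degree $3$ and the resolution is linear, the resolution is $3$-linear, as claimed.

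The real content of the statement lies in the vertex-decomposability of $\Lambda^{\vee}$, which the preceding Theorem has already supplied; granting that, the present Corollary is essentially a packaging of the Eagon--Reiner machinery. Consequently I do not anticipate a genuine obstacle. The only point requiring a moment's care is the pinning down of the generating degree, and this is handled cleanly by the characterization of minimal nonfaces in Proposition~\ref{pro:icprop}(ii), together with the observation that the field-independence must be extracted from the shellability hypothesis rather than from a Cohen--Macaulayness that would \emph{a priori} depend on $K$.
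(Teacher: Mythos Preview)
Your argument is correct and follows essentially the same route as the paper: invoke the vertex-decomposability (hence shellability) of $\Lambda^{\vee}$, apply Eagon--Reiner \cite[Corollary~5]{ER} to obtain a linear resolution, and then read off the initial degree $3$ from the description of minimal nonfaces in Proposition~\ref{pro:icprop}(ii). The only differences are cosmetic---you treat the degenerate case $\#L\le 1$ separately and make the purity and field-independence explicit---but the substance is identical.
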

\begin{proof}
    Since $\Lambda^{\vee}$ is vertex-decomposable and thus shellable, $K[\Lambda]$ has a linear resolution by~\cite[Corollary~5]{ER} 
    (see also~\cite[Theorem~8.1.9]{HH}).  Moreover, since the minimal nonfaces of $\Lambda$ are triples, it is thus a $3$-linear
    resolution.
\end{proof} 

In order to apply this fact we need a result on graded Betti numbers. Recall that graded Betti numbers of a simplicial complex $\Lambda$ on $d$ vertices are defined as 
\[
    \beta_{i, j} (K[\Lambda]) = \dim_K [\Tor^i_R (K, K[\Lambda])]_j, 
\]
where $R = K[x_1,\ldots,x_d]$. Its $i$-th total Betti number is 
\[
    \beta_{i} (K[\Lambda]) = \dim_K \Tor^i_R (K, K[\Lambda]). 
\]
The depth of a graded $K$-algebra $A$ is the maximum length of a regular sequence in $A$. If $A$ is a quotient of $R$, then its depth is $d - u$, where $u$ is the length of a minimal free resolution of $A$ over $R$. 

\begin{prop}
    \label{prop:betti e-linear} 
    Let $\Lambda$ be a simplicial complex  on $d$ vertices such that its Stanley-Reisner ideal has an $e$-linear resolution. Then, for each integer $i \ge 1$, 
    \[
        \beta_{i} (K[\Lambda]) = \beta_{i, i+e-1} (K[\Lambda]) = \sum_{j = 0}^{i+e-1} (-1)^{e-1+j} f_{j-1} (\Lambda) \binom{d-j}{i+e-1-j}. 
    \]

    In particular, the depth of $k[\Lambda]$ is at least $e-1$. 
\end{prop}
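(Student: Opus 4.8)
The plan is to exploit the rigidity of an $e$-linear resolution, which concentrates all homological information into a single diagonal strand, and then to read off the Betti numbers by comparing two expressions for the Hilbert series. First I would observe that, because the resolution is $e$-linear, for every $i \geq 1$ the graded Betti number $\beta_{i,j}(K[\Lambda])$ vanishes unless $j = i+e-1$ (while $\beta_{0,0} = 1$ and $\beta_{0,j} = 0$ otherwise, since $F_0 = R$). Summing the graded Betti numbers over $j$ then yields the first asserted equality $\beta_i(K[\Lambda]) = \beta_{i,i+e-1}(K[\Lambda])$ for all $i \geq 1$ with essentially no work.

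For the explicit formula, I would compare two expressions for the $K$-polynomial $\mathcal{K}(t) := (1-t)^d \cdot H_{K[\Lambda]}(t)$, where $H_{K[\Lambda]}(t)$ denotes the Hilbert series of $K[\Lambda]$. On the combinatorial side, the standard Stanley-Reisner formula $H_{K[\Lambda]}(t) = \sum_{j \geq 0} f_{j-1}(\Lambda) \left( \frac{t}{1-t} \right)^j$ gives $\mathcal{K}(t) = \sum_{j=0}^{d} f_{j-1}(\Lambda)\, t^j (1-t)^{d-j}$. On the homological side, the minimal free resolution gives $\mathcal{K}(t) = \sum_{i,j} (-1)^i \beta_{i,j}(K[\Lambda])\, t^j$, which by linearity collapses to $1 + \sum_{i \geq 1} (-1)^i \beta_{i,i+e-1}(K[\Lambda])\, t^{i+e-1}$.

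The key point is that these two polynomials must agree coefficient by coefficient, and linearity guarantees that the monomial $t^{i+e-1}$ is contributed by the single Betti number $\beta_{i,i+e-1}$, so that no surviving sum over homological degrees clutters the extraction. I would therefore read off the coefficient of $t^{i+e-1}$ from the combinatorial side by expanding each factor $(1-t)^{d-j}$ via the binomial theorem, obtaining $(-1)^i \beta_{i,i+e-1}(K[\Lambda]) = \sum_{j=0}^{i+e-1} (-1)^{i+e-1-j} f_{j-1}(\Lambda) \binom{d-j}{i+e-1-j}$. Multiplying through by $(-1)^i$ and simplifying the sign, using $(-1)^{2i} = 1$ and $(-1)^{-j} = (-1)^j$ to rewrite $(-1)^{2i+e-1-j}$ as $(-1)^{e-1+j}$, produces exactly the claimed formula. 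I expect the only genuine care here to be the sign bookkeeping; there is no conceptual obstacle.

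Finally, for the depth statement I would note that the binomial coefficient $\binom{d-j}{i+e-1-j}$ vanishes whenever $i+e-1 > d$, since then $i+e-1-j > d-j$ for every $j$ in range. Hence the formula forces $\beta_i(K[\Lambda]) = 0$ for all $i > d-e+1$, so the projective dimension of $K[\Lambda]$ is at most $d-e+1$. The Auslander-Buchsbaum formula $\depth K[\Lambda] = d - \operatorname{pd} K[\Lambda]$ then gives $\depth K[\Lambda] \geq e-1$, completing the proof.
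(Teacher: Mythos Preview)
Your proof is correct and follows essentially the same approach as the paper: both arguments compare the $f$-vector and the resolution expressions for the Hilbert series (equivalently, for $(1-t)^d H_{K[\Lambda]}(t)$), exploit $e$-linearity to isolate a single Betti number per power of $t$, and then read off the depth bound from the vanishing $\beta_i = 0$ for $i \ge d-e+2$. The only cosmetic difference is that you spell out the sign bookkeeping and name Auslander--Buchsbaum explicitly, whereas the paper absorbs the latter into the phrase ``which implies the depth estimate.''
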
 

\begin{proof}
    In order to simplify notation, set 
    \[
        \beta_i = \beta_{i, i+e-1} (K[\Lambda]) \quad \text{ and } \quad f_j = f_{j} (\Lambda). 
    \]
    
    By~\cite[Proposition~6.2.1]{HH}, the Hilbert series of $K[\Lambda]$ is given by
    \[
        H_{K[\Lambda]} (t) =  \sum_{i=0}^{m} f_{i-1} \, t^i \, (1-t)^{-i}, 
    \]
    where $m - 1 = \dim \Lambda$. 
       
    By assumption, the minimal free resolution of $K[\Lambda]$ has the form
    \[
        0 \to R^{\beta_d} (-e-d+1) \to \cdots \to R^{\beta_2} (-e-1) \to R^{\beta_1} (-e) \to R \to K[\Lambda] \to 0.
    \]
    
    Thus, its Hilbert series can be rewritten as 
    \[
        H_{K[\Lambda]} (t) = \frac{1}{(1-t)^d} \left [1 +  \sum_{i \ge 1} (-1)^i  \beta_i \, t^{i+e-1} \right ]. 
    \]
    
    Comparing the two expressions, we get
    \begin{align*}
        1 +  \sum_{i \ge 1} (-1)^i \beta_i \, t^{i+e-1} & = \sum_{i=0}^{m} f_{i-1} \, t^i \, (1-t)^{d-i} \\
        & = \sum_{i=0}^{m}  \left [ f_{i-1} \, t^i \, \sum_{j=0}^{d-i} (-1)^j \binom{d-i}{j} t^j \right ] \\[1ex]
        & = \sum_{i=0}^{m} t^i  \left [\sum_{j=0}^{i} (-1)^{i-j} f_{j-1}   \binom{d-j}{i-j} \right ]
    \end{align*}
    
    It follows that 
    \[
        \beta_i = \sum_{j = 0}^{i+e-1} (-1)^{e-1+j} f_{j-1}  \binom{d-j}{i+e-1-j},  
    \]
    as desired. In particular, we get $\beta_i = 0$ if $i \ge d-e+2$, which implies the depth estimate. 
\end{proof} 

\begin{rem}
    Observe that the above depth estimate is not true for arbitrary quotients of $R$. For example, the ideal $(x_1,\ldots,x_d)^e$ has an $e$-linear resolution, but $\depth R/(x_1,\ldots,x_d)^e = 0$. 
\end{rem} 

We are ready to determine the graded Betti numbers of the Stanley-Reisner ring associated to a linear space. 

\begin{thm}\label{thm:Betti lin space}
   Let $\Lambda$ be the incidence complex of a linear space $(P, L)$, and denote by $L_k$ the number of lines in $L$ with $k$ points. 
    Then, for each integer $i \ge 1$, 
    \begin{align*}
        \beta_{i} (K[\Lambda]) & = \beta_{i, i+2} (K[\Lambda])   \\
        & = \sum_{k \ge 1}  \binom{\# P-k}{i+2} L_k - \binom{\# P}{i+2} [ - 1 + \# L]  \\
        & \hspace*{0.6cm} + \binom{\# P-1}{i+1} \left [- \# P + \sum_{k \ge 2} k \, L_k  \right ] 
        - \binom{\# P-2}{i}  \left [- \binom{\# P}{2} + \sum_{k \ge 2} \binom{k}{2} L_k  \right ]. 
    \end{align*}
    
    In particular,  the depth of $K[\Lambda]$ is $2$, unless $\Lambda$ is a simplex, i.e., $\# L = 1$. 
\end{thm}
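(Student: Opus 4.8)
The plan is to feed the $3$-linear resolution of $K[\Lambda]$ (Corollary~\ref{cor:3-lin}) into the general Hilbert-series computation of Proposition~\ref{prop:betti e-linear}, and then substitute the explicit $f$-vector recorded in Proposition~\ref{pro:icprop}. Setting $e = 3$ and $d = \#P$ in Proposition~\ref{prop:betti e-linear} gives
\[
    \beta_i(K[\Lambda]) = \beta_{i, i+2}(K[\Lambda]) = \sum_{j=0}^{i+2} (-1)^j \, f_{j-1}(\Lambda) \, \binom{\#P - j}{i+2-j},
\]
so the entire statement reduces to rewriting this right-hand side in terms of the line-counts $L_k$.

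First I would record how the $f$-vector decomposes against the ``count by lines'' expression $\sum_k \binom{k}{j} L_k$. For $j \ge 3$ every collinear $j$-subset is a face lying on a unique line, so $f_{j-1}(\Lambda) = \sum_k \binom{k}{j} L_k$ exactly; the same expression also reproduces $f_1 = \binom{\#P}{2}$ at $j = 2$, since every pair of points lies on exactly one line and thus $\sum_k \binom{k}{2} L_k = \binom{\#P}{2}$. The discrepancies occur only at $j = 0$ and $j = 1$, where the linear-space values $f_{-1} = 1$ and $f_0 = \#P$ differ from $\sum_k \binom{k}{0} L_k = \#L$ and $\sum_k \binom{k}{1} L_k = \sum_k k L_k$. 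I would therefore split each $f_{j-1}(\Lambda)$ into $\sum_k \binom{k}{j} L_k$ plus a correction supported on $j \in \{0,1,2\}$.

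The main term is evaluated by interchanging the two summations and applying the binomial identity
\[
    \sum_{j} (-1)^j \binom{k}{j} \binom{\#P - j}{i+2-j} = \binom{\#P - k}{i+2},
\]
which follows, after writing $\binom{\#P-j}{i+2-j} = \binom{\#P-j}{\#P-i-2}$, from the standard identity $\sum_j (-1)^j \binom{k}{j}\binom{x-j}{r} = \binom{x-k}{r-k}$ (a one-line induction on $k$ using Pascal's rule). This yields the leading sum $\sum_{k \ge 1} \binom{\#P-k}{i+2} L_k$, and reinserting the three corrections at $j = 0, 1, 2$ produces precisely the four displayed terms; the $j=2$ correction vanishes identically by the linear-space identity $\sum_k \binom{k}{2} L_k = \binom{\#P}{2}$, which is exactly why the last bracket in the statement is zero, though it is kept for uniformity.

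For the depth statement, Proposition~\ref{prop:betti e-linear} already gives $\depth K[\Lambda] \ge 2$, so it remains only to show $\beta_{\#P-2}(K[\Lambda]) \neq 0$ whenever $\#L > 1$ (when $\#L = 1$ the complex $\Lambda$ is a simplex and $K[\Lambda]$ is a polynomial ring of depth $\#P$). Specializing the main formula at $i = \#P - 2$ collapses every binomial coefficient to $1$ or $0$ and leaves $\beta_{\#P-2} = 1 - \#L - \#P + \sum_{\ell \in L} \#\ell$. The crux, and the one place where the geometry genuinely enters, is to see that this is positive: fixing any point $p_0$ and sorting pairs of points by whether the line joining them passes through $p_0$, I would rewrite this quantity as $\sum_{\ell \not\ni p_0} (\#\ell - 1)$. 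Since $\#L > 1$ forces every point to lie on at least two lines, some line misses $p_0$, each such line contributes at least $1$, and so $\beta_{\#P-2} \ge 1$, giving depth exactly $2$. I expect this final reinterpretation to be the only nonformal step; everything before it is bookkeeping plus the routine binomial identity.
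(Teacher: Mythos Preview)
Your derivation of the Betti-number formula is essentially the paper's: you invoke the same $3$-linear resolution (Corollary~\ref{cor:3-lin}), the same Hilbert-series formula (Proposition~\ref{prop:betti e-linear} with $e=3$), and the same Vandermonde-type identity (the paper's Lemma~\ref{lem:sum-formula}, which you reprove by induction). The only cosmetic difference is bookkeeping: the paper peels off $j=0,1,2$ explicitly and sums $j\ge 3$ via lines, whereas you write every $f_{j-1}$ as $\sum_k \binom{k}{j}L_k$ plus a correction supported on $j\le 2$.

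For the depth claim your route diverges slightly and is arguably cleaner. The paper computes $\beta_{\#P-2} = \binom{\#P-1}{2} - \sum_{\ell} \binom{\#\ell-1}{2}$ and bounds it below using the pair-count identity $\binom{\#P}{2} = \sum_\ell \binom{\#\ell}{2}$ together with $\#P \le \sum_\ell(\#\ell-1)$ and Pascal's rule. Your rewriting $\beta_{\#P-2} = \sum_{\ell \not\ni p_0}(\#\ell-1)$ packages the same geometry more transparently: the step $\sum_{\ell \ni p_0}(\#\ell-1) = \#P-1$ is exactly the statement that the lines through $p_0$ partition $P\setminus\{p_0\}$, and positivity then follows once $\#L>1$ forces some line to avoid $p_0$. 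Both arguments are correct and of comparable length; yours makes the combinatorial meaning of $\beta_{\#P-2}$ visible.
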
 

\begin{proof} 
    Set $d = \# P$ and $\beta_i = \beta_{i} (K[\Lambda])$. By Proposition \ref{pro:icprop}, we know the $f$-vector of $\Lambda$. Thus, Corollary \ref{cor:3-lin} and  Proposition \ref{prop:betti e-linear} give, for any $i \ge 1$,
    \begin{align*}
        \beta_i & = \binom{d}{i+2} - d \binom{d-1}{i+1} + \binom{d}{2} \binom{d-2}{i} + \sum_{j=3}^{i+2} (-1)^j \left [\sum_{\ell \in L} \binom{\# \ell}{j} \right ] \binom{d-j}{i+2-j} \\
        & = \binom{d}{i+2} - d \binom{d-1}{i+1} + \binom{d}{2} \binom{d-2}{i} + \sum_{\ell \in L}  \sum_{j = 3}^{i+2} (-1)^{j} \binom{\# \ell}{j}  \binom{d -j}{i+2-j}.  
    \end{align*} 
    
    Using Lemma \ref{lem:sum-formula} below, we obtain
    \begin{align*}
        \beta_i & = \binom{d}{i+2} - d \binom{d-1}{i+1} + \binom{d}{2} \binom{d-2}{i} \\[.5ex]
        & \hspace*{0.6cm} + \sum_{k \ge 2} L_k \left [\binom{d-k}{i+2} - \binom{d}{i+2} + k \binom{d-1}{i+1} - \binom{k}{2} \binom{d-2}{i} \right ], 
    \end{align*} 
    which yields the desired formula for the Betti numbers. Furthermore, it follows
    \begin{equation*}
        \begin{split}
            \beta_{d-2}  & = \binom{d}{2} - d + 1 - \sum_{k \ge 1} L_k \left [\binom{k}{2} - k + 1   \right ] \\
            & =    \binom{d-1}{2} - \sum_{k \ge 2} L_k \binom{k-1}{2}.  \\
        \end{split}
    \end{equation*}
    
    Thus, the argument is complete once we have shown that 
    \[
        \binom{d-1}{2}  > \sum_{k \ge 2} L_k \binom{k-1}{2} = \sum_{\ell \in L} \binom{\# \ell -1}{2}, 
    \]
    unless $\Lambda$ is a simplex because then the projective dimension of $K[\Lambda]$ is $d-2$,  and so the depth of $K[\Lambda]$ is $2$, as desired. 

    Since any two points in $P$ lie on one and only one line, we get
    \[
         \binom{d}{2}=\sum_{\ell \in L} \binom{\# \ell}{2}. 
    \]
    
    If $\Lambda$ is not a simplex, then each point in $P$ is on at least two lines, which implies 
    \[
        d \le \sum_{\ell \in L} (\# \ell -1). 
    \]
    
    The last two displayed formulas along with Pascal's triangle equality easily give the needed estimate. 
\end{proof}

In the above argument we used the following identity. 

\begin{lem}\label{lem:sum-formula}
    Let $d, k$, and $b$ be nonnegative integers. Then 
    \[
        \sum_{j=0}^b (-1)^j \binom{k}{j} \binom{d-j}{b-j} = \binom{d-k}{b}. 
    \]    
\end{lem}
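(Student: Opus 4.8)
The plan is to recognize Lemma~\ref{lem:sum-formula} as a Vandermonde-type convolution and to prove it by a short generating-function computation. This approach has the advantage of being valid for all nonnegative integers $d, k, b$ simultaneously, including the degenerate case $k > d$, where $\binom{d-k}{b}$ must be interpreted through the usual polynomial convention for binomial coefficients with negative upper entry. I would work throughout in the ring of formal power series $K[[x]]$ (or $\Q[[x]]$), where $(1+x)^n = \sum_{i \ge 0} \binom{n}{i} x^i$ makes sense for every integer $n$, reducing to a polynomial exactly when $n \ge 0$.

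First I would record the two elementary observations that drive the argument. On the right-hand side, $\binom{d-k}{b}$ is simply the coefficient of $x^b$ in $(1+x)^{d-k}$. For each summand on the left-hand side, I would rewrite $\binom{d-j}{b-j}$ as the coefficient of $x^{b-j}$ in $(1+x)^{d-j}$, equivalently as the coefficient of $x^b$ in $x^j (1+x)^{d-j}$. Since $\binom{k}{j} = 0$ for $j > k$, the displayed sum is unchanged if we let $j$ range over all nonnegative integers, and the coefficient of $x^b$ already ignores the terms with $j > b$; thus the left-hand side equals the coefficient of $x^b$ in the series $\sum_{j \ge 0} (-1)^j \binom{k}{j} x^j (1+x)^{d-j}$.

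The heart of the computation is then a single factorization. Pulling out $(1+x)^d$ and applying the binomial theorem to the remaining factor gives the identity $\sum_{j \ge 0} \binom{k}{j} \left(\frac{-x}{1+x}\right)^j = \left(1 - \frac{x}{1+x}\right)^k = \left(\frac{1}{1+x}\right)^k$, so that $\sum_{j \ge 0} (-1)^j \binom{k}{j} x^j (1+x)^{d-j} = (1+x)^d (1+x)^{-k} = (1+x)^{d-k}$. Extracting the coefficient of $x^b$ on each side yields precisely the asserted equality.

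The only point requiring genuine care, which I would treat as the main (mild) obstacle, is justifying the formal manipulations with $(1+x)^{-1}$ and with the possibly negative exponents $d - j$ and $d - k$. I would address this by noting that $1 + x$ is a unit in $K[[x]]$, so $(1+x)^{-k}$ and every $(1+x)^{d-j}$ are well-defined power series; the finiteness coming from $\binom{k}{j} = 0$ for $j > k$ means only finitely many terms occur, so there is no convergence issue and the binomial-theorem step is a legitimate identity of power series. If one prefers a more self-contained route avoiding power series entirely, I would instead give the inclusion-exclusion reading in the case $d \ge k$: fixing a $k$-element ``forbidden'' subset of a $d$-set, the number of $b$-subsets meeting none of the forbidden elements is $\binom{d-k}{b}$, while inclusion-exclusion over the forbidden elements produces exactly $\sum_{j} (-1)^j \binom{k}{j} \binom{d-j}{b-j}$; the remaining case $k > d$ would then be handled by polynomiality in $d$.
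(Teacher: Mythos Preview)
Your generating-function argument is correct and complete: the factorization
\[
\sum_{j \ge 0} (-1)^j \binom{k}{j} x^j (1+x)^{d-j} = (1+x)^d \left(1 - \frac{x}{1+x}\right)^{k} = (1+x)^{d-k}
\]
is a legitimate identity in $K[[x]]$ since $1+x$ is a unit and the sum has only finitely many nonzero terms (because $k$ is a nonnegative integer), and extracting the coefficient of $x^b$ gives the claim for all nonnegative $d,k,b$, including the case $k>d$ you were careful to flag. Your alternative inclusion--exclusion reading is also sound.

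The paper, by contrast, does not argue at all: it simply identifies the formula as a known entry in Gould's tables of binomial identities \cite[Equation~(6.18)]{Gould} after a change of variables. So your route is genuinely different in that it supplies a self-contained proof rather than a citation. The trade-off is the obvious one: the paper's approach is maximally brief and appropriate for a result that is folklore, while yours makes the note independent of an external reference and, via the combinatorial interpretation, explains \emph{why} the identity holds. Either is acceptable here; if you want to match the paper's economy, you could compress your argument to two lines by writing only the displayed factorization above and the coefficient extraction.
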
 

\begin{proof} 
    This follows, for example, from \cite[Equation (6.18)]{Gould} by taking $b_j = (-1)^j, \ d = x+y, \ b=n$, and $k = x$. 
\end{proof} 

\begin{rem}\label{rem:number gen}
    For $i = 1$, one can simplify the formula in Theorem \ref{thm:Betti lin space}, using Lemma \ref{lem:sum-formula}. This gives  the expected number
    \[
        \beta_1(K[\Lambda]) = \binom{\# P}{3} - \sum_{k \ge 3} \binom{k}{3} L_k. 
    \]
\end{rem}     

In the case of a projective plane, the above formula for the Betti numbers becomes simpler.  

\begin{cor}\label{cor:Betti proj plane}
    Let $\Lambda$ be the incidence complex of a projective plane of order $q$. Then, for each integer $i \ge 1$, 
    \begin{align*}
        \beta_{i} (K[\Lambda]) & = \beta_{i, i+2} (K[\Lambda])   \\
        & = (q^2 + q +1) \binom{q^2}{i+2} + q^3 \binom{q^2 + q}{i+1} - q (q+1) \binom{q^2 + q}{i+2}. 
    \end{align*}     
    
    In particular, $\beta_{q^2 + q -1} (K[\Lambda]) = q^3$.
\end{cor}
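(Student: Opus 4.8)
The plan is to specialize the general Betti number formula of Theorem~\ref{thm:Betti lin space} to the case of a projective plane of order $q$ and then simplify. Recall that for such a plane we have the parameters $\#P = \#L = q^2 + q + 1$ and that every line contains exactly $q+1$ points. In the notation of Theorem~\ref{thm:Betti lin space}, this means $L_k = 0$ for all $k \neq q+1$, while $L_{q+1} = q^2 + q + 1$. The whole proof is therefore a matter of plugging these three pieces of data into the four-term expression for $\beta_i(K[\Lambda])$ and collecting binomial coefficients.

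Concretely, I would substitute into the displayed formula of Theorem~\ref{thm:Betti lin space} as follows. The first sum $\sum_{k \ge 1} \binom{\#P - k}{i+2} L_k$ collapses to the single term $(q^2 + q + 1)\binom{q^2}{i+2}$, since $\#P - (q+1) = q^2$. The bracket $[-1 + \#L]$ equals $q^2 + q$, and $\binom{\#P}{i+2} = \binom{q^2+q+1}{i+2}$. The coefficient $[-\#P + \sum_{k\ge 2} k\,L_k]$ simplifies because $\sum_k k\, L_k = (q+1)(q^2+q+1)$, so this bracket becomes $(q+1)(q^2+q+1) - (q^2+q+1) = q(q^2+q+1)$, multiplying $\binom{q^2+q}{i+1}$. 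Finally $\sum_{k \ge 2}\binom{k}{2} L_k = \binom{q+1}{2}(q^2+q+1)$, so the last bracket $[-\binom{\#P}{2} + \binom{q+1}{2}(q^2+q+1)]$ needs to be evaluated, multiplying $\binom{q^2+q-1}{i}$ (noting $\#P - 2 = q^2 + q - 1$).

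The main (and essentially only) obstacle is the bookkeeping: several of the resulting terms involve binomial coefficients with different tops (such as $\binom{q^2+q+1}{i+2}$ versus $\binom{q^2+q}{i+2}$ and $\binom{q^2+q-1}{i}$), and to reach the clean three-term answer stated in the corollary one must combine them using Pascal's identity. In particular I expect the two pieces proportional to $\binom{q^2+q+1}{i+2}$ and the contribution from the $\binom{q^2+q-1}{i}$ term to merge with the surviving $\binom{q^2+q}{i+1}$ and $\binom{q^2+q}{i+2}$ terms after repeated application of $\binom{n}{r} = \binom{n-1}{r} + \binom{n-1}{r-1}$. I would carry out this reduction carefully, verifying that all coefficients of $\binom{q^2+q}{i+1}$ collect to $q^3$ and that the net coefficient of $\binom{q^2+q}{i+2}$ is $-q(q+1)$, leaving only the three displayed terms.

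For the final assertion, I would substitute $i = q^2 + q - 1$ into the simplified formula. At this value, $\binom{q^2}{i+2} = \binom{q^2}{q^2+q+1} = 0$ and $\binom{q^2+q}{i+2} = \binom{q^2+q}{q^2+q+1} = 0$, since in each case the bottom exceeds the top. The only surviving term is $q^3 \binom{q^2+q}{i+1} = q^3 \binom{q^2+q}{q^2+q} = q^3 \cdot 1 = q^3$, giving $\beta_{q^2+q-1}(K[\Lambda]) = q^3$ as claimed. This last step is a direct evaluation once the simplified formula is in hand, and serves as a useful consistency check against the general computation of $\beta_{d-2}$ carried out in the proof of Theorem~\ref{thm:Betti lin space}.
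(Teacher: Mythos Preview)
Your proposal is correct and matches the paper's own proof, which simply says the result follows from Theorem~\ref{thm:Betti lin space} by a straightforward computation using that there are $q^2+q+1$ lines, each with $q+1$ points. One small simplification you may have missed: the bracket $-\binom{\#P}{2} + \binom{q+1}{2}(q^2+q+1)$ is identically zero (both terms equal $\frac{(q^2+q)(q^2+q+1)}{2}$), so the $\binom{q^2+q-1}{i}$ contribution vanishes and a single application of Pascal's identity to $\binom{q^2+q+1}{i+2}$ yields the stated formula.
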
 

\begin{proof}
    Using that each line contains $q+1$ points and that  there are $q^2 + q + 1$ lines, this follows from Theorem \ref{thm:Betti lin space} and a straightforward computation. 
\end{proof} 

\begin{example}\label{exa:Betti plane of order 2} 
    Let $\Lambda$ be the incidence complex of a projective plane of order $q=2$.    Then its Stanley-Reisner ring has a minimal free resolution of the form 
    \[
        0 \to R(-7)^8 \to R(-6)^{42} \to R(-5)^{84} \to R(-4)^{77} \to R(-3)^{28} \to R \to K[\Lambda] \to 0. 
    \]
\end{example}


\section{The inverse system algebra associated to a linear space}\label{sec:is}

Let $(P, L)$ be a linear space, and set $d = \#P$.  We again assign to each point an indeterminate, $x_1, \ldots, x_d$, and let $R = K[x_1,\ldots,x_d]$, where $K$ is a field.  Later we will study how the characteristic of $K$ affects the properties
of the algebras that we will consider.

Each line $\ell \in L$ can be represented by a monomial $x_{i_1} \cdots x_{i_{\# \ell}}$ given by the product of the
indeterminates associated to the points on the line.  We thus obtain $e = \#L$ monomials $m_1, \ldots, m_e$.  Let
$I$ be the annihilator of $m_1, \ldots, m_e$ via \emph{inverse systems}, that is, $I$ consists of the polynomials that annihilate each of the monomials $m_1, \ldots, m_e$ under contraction. It is a monomial ideal.   Notice that in characteristic zero contraction is equivalent to the differentiation action (see \cite{IK}).  We will slightly abuse notation and view $I$ as an
ideal of $R$. Hence, for any positive integer $i$, a basis for $R/I$ is given by the monomials of degree $i$ that divide at least one of the $m_i$.  The set of these monomials generates the inverse system of $R/I$. 


\subsection{Minimal generators}

We first consider the minimal generators of $I$.

Since the monomials $m_1, \ldots, m_e$ are squarefree, they are annihilated by the squares of the indeterminates of which there are $d$.
Further, as any two points lie on a unique line, these are precisely the $d$ minimal generators of degree $2$.

Using this and noting that the inverse system consists of all  monomials corresponding to subsets of points on some line, 
we see that the generators of $I$ of degree $3$ are given by the squarefree monomials corresponding to three noncollinear points.

We now claim that these quadrics and cubics form a minimal generating set for $I$.  

\begin{prop}
  \label{prop:generators}
    The ideal $I$ is minimally generated in degrees $2$ and $3$; namely, a minimal generating set for $I$ consists of the $d$ squares of the 
    indeterminates, $x_1^2,\dots,x_d^2$, along with the squarefree monomials of degree $3$ that correspond to all possible triples of noncollinear points. 
\end{prop}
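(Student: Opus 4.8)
The plan is to let $J \subseteq R$ denote the ideal generated by the squares $x_1^2,\dots,x_d^2$ together with the squarefree cubics $x_ix_jx_k$ indexed by all noncollinear triples $\{p_i,p_j,p_k\}$. The discussion preceding the statement already records that each listed generator lies in $I$: the squares annihilate the squarefree $m_j$, and a cubic $x_ix_jx_k$ divides no line monomial exactly when its three points are noncollinear. Hence $J \subseteq I$. Since $I$ is a monomial ideal, the whole content of the proposition reduces to two assertions: (i) $J = I$, and (ii) the displayed generators form an antichain under divisibility, so they are a minimal generating set. Assertion (ii) is immediate: a square $x_i^2$ cannot divide another distinct square, nor a squarefree cubic; a cubic is too large in degree to divide a square; and two distinct squarefree cubics never divide one another.

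For (i) I would prove $I \subseteq J$ by testing an arbitrary monomial $\mu \in I$. If $\mu$ is not squarefree, it is divisible by some $x_i^2$ and hence lies in $J$. If $\mu$ is squarefree, let $S$ be its support, i.e.\ the set of points $p_i$ with $x_i \mid \mu$. By the basis description recalled above---the monomials of $R/I$ are precisely those dividing some line monomial $m_j$---the condition $\mu \in I$ is equivalent to $S$ lying on no single line of $L$. Because any one or two points are automatically collinear (two distinct points determine a unique line), this forces $\#S \ge 3$.

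The heart of the argument, and the step I expect to be the main obstacle, is the purely combinatorial claim that a point set $S$ lying on no single line must contain a \emph{noncollinear triple}. I would argue by contradiction: assuming every triple drawn from $S$ is collinear, fix three points $a,b,c \in S$ on a common line $\ell$; for any further point $d \in S$, the collinear triple $\{a,b,d\}$ must lie on the unique line through $a$ and $b$, which is $\ell$, whence $d \in \ell$. Running over all $d$ shows $S \subseteq \ell$, contradicting the choice of $S$. Once a noncollinear triple $\{p_i,p_j,p_k\} \subseteq S$ is produced, the generator $x_ix_jx_k$ of $J$ divides $\mu$, so $\mu \in J$. This yields $I \subseteq J$ and finishes the proof. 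The only nontrivial input is the defining linear-space axiom that two distinct points lie on exactly one line, which drives both the reduction to $\#S \ge 3$ and the triple-extraction lemma.
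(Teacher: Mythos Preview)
Your proof is correct and follows essentially the same approach as the paper: the key step in both is the combinatorial observation that any set of points not lying on a single line must contain a noncollinear triple, proved via the uniqueness of the line through two given points. Your write-up is more explicit about the $J \subseteq I$ inclusion and the antichain check, but the core argument is identical.
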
 

\begin{proof} 
We have to show that $I$ does not have minimal generators whose degree is at least four. 
This comes from the fact that if 
four or more points of the space do not all lie on a line, then there is a subset of three that do not all lie on a line.  
Indeed, if $w,x,y,z$ do not all lie on a line but any three of them do, then in particular $\{w,x,y\}$ and $\{x,y,z\}$ are both 
sets of collinear points. Hence all of $w,x,y,z$ must lie on the line containing $x$ and $y$, since this is unique. 
\end{proof}

Proposition \ref{prop:generators} shows that  $I = I_{\Lambda} + (x_1^2, \ldots, x_d^2)$, where $I_{\Lambda}$ is the Stanley-Reisner ideal of the incidence complex $\Lambda$ 
of $(P, L)$ discussed in the previous section.  In particular, we have that the $h$-vector (or Hilbert function) of $R/I$ is $f(\Lambda)$, with $\dim_K [R/I]_1 = d$
and $\dim_K [R/I]_2 = \binom{d}{2}$.  

In the case when $(P, L)$ is a finite projective plane of order $q$, then $d = q^2 + q + 1$ and the $h$-vector of $R/I$ is
\begin{equation*} \label{hf}
    \left ( 1, d, d\cdot \binom{q+1}{2}, d \cdot \binom{q+1}{3} , \dots, d \cdot \binom{q+1}{q} , d \right )
\end{equation*}
(cf.\ \cite[page 38]{bjorner}).  Notice that this is a pure $O$-sequence (see for instance \cite{BMMNZ} for basic facts about pure $O$-sequences). In fact, the $h$-vector of $R/I$ is a pure $O$-sequence whenever the linear space $(P, L)$ is equipointed as then $R/I$ is level. 


\subsection{Minimal free resolution}

We now consider the minimal free resolution of $R/I$.  Let us start with some general remarks. 

Consider a simplicial complex $\Lambda$ on the vertex set $[d] = \{1,2,\ldots,d\}$. 
For a subset $F \subset [d]$, denote by $\fm^{\overline{F}}$ the ideal 
\[
\fm^{\overline{F}} = (x_i \: | \; i \in [d] \setminus F). 
\]
Then the Stanley-Reisner ideal of $\Lambda$ in $R = K[x_1,\ldots,x_d]$  can also be written as 
\[
I_{\Lambda} = \bigcap_{F \in \Lambda} \fm^{\overline{F}}, 
\]
where it is enough to take the intersection over the facets of $\Lambda$. 

For a subset $G$ of $[d]$, let $x_G$ be the monomial 
\[
x_G = \prod_{i \in G} x_i.
\]
Thus, we get for the colon ideal 
\begin{equation}
   \label{eq:colon formula}
I_{\Lambda} : x_G = \bigcap_{F \in \Lambda,\ G \subset F} \fm^{\overline{F}}.  
\end{equation}

Let now $\Lambda$ be the incidence complex to a linear space $(P, L)$, where $d = \# P$. 
 Then, for each $i$, the ideal 
\[
I_{\Lambda}  : x_i =  \bigcap_{\ell \in L,\ i \in \ell} \fm^{\overline{\ell}}
\]
 is the Stanley-Reisner ideal of a disjoint union of simplices. 

Now consider a subset $G$ of the vertex set with at least two elements. Then we get
\begin{equation} \label{colon}
I_{\Lambda}  : x_G =  \begin{cases}
 \fm^{\overline{\ell}} = (x_i \; | \; i \notin \ell) & \text{ if } G \subset \ell \\
R & \text{ otherwise}. 
 \end{cases}
\end{equation}
It follows in particular that $I_{\Lambda}  : x_G$ has a 1-linear resolution, unless it is the whole ring. 

Let  now 
\[
Q = (x_1^2,\ldots,x_d^2), 
\]
and set $I_{\Delta} = I_{\Lambda} + Q$. We want to determine the graded Betti numbers of $R/I_{\Delta}$. As preparation, we consider the link of $\Lambda$ with respect to any vertex. 

\begin{prop}
    \label{prop:Betti link} 
Let $\Lambda$ be the incidence complex of a linear space $(P, L)$, and denote by $L_k$ the number of lines in $L$ with $k$ points. 
Then, for each point $p$ of $P$ and each integer $i \ge 1$, 
\begin{align*}
\beta_{i} (K[\link_{\Lambda}(p)]) & = \beta_{i, i+1} (K[\link_{\Lambda}(p)])   \\
& =  \binom{\# P-1}{i+1} [ -1 + \# \{\ell \in L \; : \: p \in \ell\}] - \sum_{p \in \ell} \binom{\# P - \# \ell}{i+1}. 
\end{align*}
\end{prop}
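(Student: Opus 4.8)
By Proposition \ref{pro:icprop}(iv), the link $\link_{\Lambda}(p)$ is a disjoint union of simplices: the lines through $p$ partition the points of $P \setminus \{p\}$, and each line $\ell \ni p$ contributes a simplex on its $\#\ell - 1$ vertices (the points of $\ell$ other than $p$). Let me write $r = \#\{\ell \in L : p \in \ell\}$ for the number of lines through $p$, so that $\link_{\Lambda}(p)$ is a disjoint union of $r$ simplices whose vertex sets have sizes $\#\ell - 1$ for the lines $\ell \ni p$. The plan is to compute the graded Betti numbers of this disjoint union directly, either from its Stanley-Reisner ideal or from the already-proved machinery for linear resolutions, and then read off the claimed formula.

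First I would observe that a disjoint union of simplices is itself the incidence complex of a (possibly degenerate) ``linear space'' in which the only nontrivial relations are collinearity within each part; equivalently, its minimal nonfaces are exactly the pairs of vertices lying in different parts. Hence its Stanley-Reisner ideal is generated by quadrics and has a $2$-linear resolution. This means I can invoke Proposition \ref{prop:betti e-linear} with $e = 2$ and with the vertex count $d' = \#P - 1$ (the link lives on $P \setminus \{p\}$), which immediately gives $\beta_i = \beta_{i,i+1}$ and the formula
\[
\beta_i(K[\link_{\Lambda}(p)]) = \sum_{j=0}^{i+1} (-1)^{1+j} f_{j-1}(\link_{\Lambda}(p)) \binom{\#P-1-j}{i+1-j}.
\]
To justify the $2$-linearity rigorously I would note that a disjoint union of simplices has Alexander dual that is pure and shellable (indeed its Stanley-Reisner ideal is a complete intersection-like ``union of coordinate subspaces'' whose dual is a join of boundaries), so Corollary \ref{cor:3-lin}'s underlying input --- the Eagon--Reiner criterion \cite[Corollary~5]{ER} --- applies with $e=2$; alternatively one can cite that an ideal with a $2$-linear resolution is exactly the edge ideal of a complex whose complement graph is chordal, which holds here since the complement graph is a disjoint union of cliques.

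Next I would substitute the $f$-vector of the link. Since the parts are simplices, the faces of $\link_{\Lambda}(p)$ are exactly the faces of the individual simplices, so for $j \ge 1$ one has $f_{j-1}(\link_{\Lambda}(p)) = \sum_{p \in \ell} \binom{\#\ell - 1}{j}$, together with $f_{-1} = 1$. Plugging this into the displayed sum and interchanging the order of summation, the contribution of each line $\ell \ni p$ becomes $\sum_{j} (-1)^{1+j} \binom{\#\ell-1}{j}\binom{\#P-1-j}{i+1-j}$, which is precisely the shape to which Lemma \ref{lem:sum-formula} applies (with $k = \#\ell - 1$, $b = i+1$, $d = \#P - 1$), collapsing to $-\binom{\#P - \#\ell}{i+1}$; the $j=0$ term $f_{-1}=1$ from each of the $r$ lines, handled separately, supplies the $\binom{\#P-1}{i+1}[-1 + r]$ piece after accounting for signs. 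This matches the stated formula.

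The main obstacle I anticipate is purely bookkeeping rather than conceptual: correctly separating the $f_{-1}$ (empty-face) contribution from the per-line sums, so that the ``$-1$'' and the count $\#\{\ell : p \in \ell\}$ in the first bracket come out with the right signs, and confirming that each per-line sum genuinely has the index range needed for Lemma \ref{lem:sum-formula} to give exactly $-\binom{\#P-\#\ell}{i+1}$ with no boundary-term discrepancies. Establishing the $2$-linearity of the resolution cleanly is the one genuinely structural point, but it follows from the same Eagon--Reiner technology already used in Corollary \ref{cor:3-lin}, so I would treat it briefly and devote the bulk of the argument to the clean application of Proposition \ref{prop:betti e-linear} and Lemma \ref{lem:sum-formula}.
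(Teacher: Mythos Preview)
Your proposal is correct and follows essentially the same route as the paper: compute the $f$-vector of $\link_{\Lambda}(p)$ from its description as a disjoint union of simplices, invoke the $2$-linearity of its Stanley--Reisner ideal, apply Proposition~\ref{prop:betti e-linear} with $e=2$, and simplify via Lemma~\ref{lem:sum-formula}. If anything, you give more justification for the $2$-linearity than the paper does (the paper simply asserts ``We know that the resolution \ldots\ is $2$-linear''), and your anticipated bookkeeping obstacle is exactly the computation the paper carries out.
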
 

\begin{proof}
By Proposition \ref{pro:icprop}, $\link_{\Lambda}(p)$ is a disjoint union of simplices. The vertex of any such simplex is the set of points on a line containing $p$ other than $p$. Hence, its $f$-vector is
\[
f_{j-1} = \begin{cases} 
1 & \text{ if } j = 0\\
\sum_{p \in \ell} \binom{\# \ell - 1}{j} & \text{ if } j \ge 1. 
\end{cases}
\]

We know that the resolution of the Stanley-Reisner ideal of $\link_{\Lambda}(p)$ is 2-linear. Hence, Proposition \ref{prop:betti e-linear}  gives 
\begin{align*}
\beta_{i} (K[\link_{\Lambda}(p)]) & = \beta_{i, i+1} (K[\link_{\Lambda}(p)])   \\
& = \sum_{j=0}^{i+1} (-1)^{j+1} f_{j-1} \binom{\# P-1-j}{i+1-j} \\
& = - \binom{\# P-1}{i+1} + \sum_{p \in \ell} \sum_{j=1}^{i+1} (-1)^{j+1} \binom{\# \ell - 1}{j} \binom{\# P-1-j}{i+1-j}.
\end{align*}

Using Lemma \ref{lem:sum-formula}, we thus obtain
\begin{align*}
\beta_{i} (K[\link_{\Lambda}(p)]) & = - \binom{\# P-1}{i+1} + \sum_{p \in \ell} \left [ \binom{\# P - \# \ell}{i+1} - \binom{\# P-1}{i+1} \right ] \\
& =  \binom{\# P-1}{i+1} [ -1 + \# \{\ell \in L \; : \: p \in \ell\}] - \sum_{p \in \ell} \binom{\# P - \# \ell}{i+1}, 
\end{align*}
as desired. 
\end{proof} 

Again, the formula in the case of a projective plane becomes more explicit. 

\begin{cor}
    \label{cor:Betti link proj plane}
Let $\Lambda$ be the incidence complex of a projective plane of order $q$. Then, for each point $p \in P$ and each integer $i \ge 1$, 
\begin{equation*}
\beta_{i} (K[\link_{\Lambda}(p)]) = q \binom{q^2 + q}{i+1} - (q+1) \binom{q^2}{i+1}.
\end{equation*}
\end{cor}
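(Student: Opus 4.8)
The plan is to specialize Proposition~\ref{prop:Betti link} to a projective plane of order $q$ and verify that the general formula collapses to the stated closed form. For such a plane we have $\#P = q^2+q+1$, every line contains exactly $q+1$ points, and every point lies on exactly $q+1$ lines. Thus for a fixed point $p$, the index set $\{\ell \in L : p \in \ell\}$ has cardinality $q+1$, so the factor $-1 + \#\{\ell \in L : p \in \ell\}$ becomes simply $q$.

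First I would substitute these values into the first summand. Since $\#P - 1 = q^2 + q$, the term $\binom{\#P-1}{i+1}[-1 + \#\{\ell : p \in \ell\}]$ becomes exactly $q\binom{q^2+q}{i+1}$, which matches the first term of the desired expression. Next I would handle the sum $\sum_{p \in \ell} \binom{\#P - \#\ell}{i+1}$: because each of the $q+1$ lines through $p$ contains $q+1$ points, every summand equals $\binom{(q^2+q+1) - (q+1)}{i+1} = \binom{q^2}{i+1}$, and there are $q+1$ such lines, so the entire sum equals $(q+1)\binom{q^2}{i+1}$. Subtracting this from the first summand yields $q\binom{q^2+q}{i+1} - (q+1)\binom{q^2}{i+1}$, exactly as claimed.

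This is a direct specialization, so there is essentially no obstacle beyond bookkeeping; the only point requiring care is confirming that the combinatorial parameters of a projective plane are the ones I have used, which is guaranteed by the standard facts recalled in the introduction (each of the $q^2+q+1$ points lies on $q+1$ lines, each line has $q+1$ points). I would therefore present the argument as a short computation: quote Proposition~\ref{prop:Betti link}, note that the sum over lines through $p$ collapses because all relevant cardinalities are constant, and simplify. No further lemmas are needed.
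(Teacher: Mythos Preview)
Your proposal is correct and follows exactly the approach implicit in the paper: specialize Proposition~\ref{prop:Betti link} using the standard parameters of a projective plane of order $q$ (namely $\#P-1=q^2+q$, each point on $q+1$ lines, each line of size $q+1$) and simplify. The paper does not even spell out the computation, so your write-up is, if anything, more detailed than what appears there.
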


We are ready for the main result of this section. 

\begin{thm}
     \label{thm:Betti w squares} 
Consider the ideal $I_{\Delta} = I_{\Lambda} + Q$, where $\Lambda$ is the incidence complex of a linear space $(P, L)$. Then 
$R/I_{\Delta}$ has graded Betti numbers
\[
\beta_{1, j} (R/I_{\Delta}) = \begin{cases}
\# P & \text{ if } j = 2 \\[.5ex]
\binom{\# P}{3} - \sum_{k \ge 3} \binom{k}{3} L_k & \text{ if } j = 3 \\[.5ex]
0 & \text{ otherwise},
\end{cases}
\]     
and, for each integer $i \ge 2$, 
\[
\beta_{i, j} (R/I_{\Delta}) = \begin{cases}
0 & \text{ if } j \le i+1 \\[.9ex]
 \binom{\# P}{i+2} - \# P \binom{\# P}{i+1} + \binom{\# P}{2} \binom{\# P - 2}{i}  \\[.5ex]
 \hspace*{.2cm} + \sum_{k \ge 2} L_k \left [ \binom{\# P-k}{i+2} - k \binom{\# P-k}{i} + \binom{k}{2} \binom{\# P-k}{i-2}  \right ] \\[.5ex]
 \hspace*{.2cm}  + \sum_{k \ge 2} L_k \left [ 
 - \binom{\# P}{i+2} + k \binom{\# P}{i+1} - \binom{k}{2} \binom{\# P-2}{i}  \right ] & \text{ if } j = i+2 \\[.9ex]
\sum_{k \ge j - i} \binom{k}{j-i} \binom{\# P - k}{2 i - j} L_k & \text{ if } j \ge i+3. 
\end{cases}
\]
\end{thm}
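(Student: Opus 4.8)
The plan is to compute the graded Betti numbers of $R/I_\Delta$ by comparing it to the Stanley--Reisner ring $R/I_\Lambda$, whose resolution is already pinned down in Theorem~\ref{thm:Betti lin space}. Since $I_\Delta = I_\Lambda + Q$, I would work with the short exact sequence
\[
0 \to M \to R/I_\Lambda \to R/I_\Delta \to 0, \qquad M := I_\Delta/I_\Lambda = (I_\Lambda + Q)/I_\Lambda,
\]
and its associated long exact sequence of $\Tor^R(K,-)$. The decisive input is that $K[\Lambda]=R/I_\Lambda$ has a $3$-linear resolution (Corollary~\ref{cor:3-lin}), so that $\Tor_i^R(K,R/I_\Lambda)$ is concentrated in the single internal degree $j=i+2$. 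Consequently, for every $j \ge i+3$ the terms $\Tor_i(K,R/I_\Lambda)_j$ and $\Tor_{i-1}(K,R/I_\Lambda)_j$ both vanish, the long exact sequence collapses to an isomorphism $\beta_{i,j}(R/I_\Delta) = \beta_{i-1,j}(M)$, and the entire high strand $j\ge i+3$ of the theorem is governed by the resolution of $M$ alone. The $j=i+2$ strand, by contrast, mixes the Betti numbers of $R/I_\Lambda$ with those of $M$ through the connecting homomorphism.

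The heart of the argument is therefore to resolve $M$. A monomial $K$-basis of $M$ consists of the monomials whose support is a face of $\Lambda$ (i.e.\ is collinear) and which are divisible by at least one square; equivalently $M = \sum_i x_i^2\cdot(R/I_\Lambda)$, and each summand satisfies $x_i^2\cdot(R/I_\Lambda) \cong \bigl(R/(I_\Lambda:x_i)\bigr)(-2) = K[\link_\Lambda(i)](-2)$, whose Betti numbers are exactly those computed in Proposition~\ref{prop:Betti link}. To resolve $M$ I would filter it by these shifted link modules, which amounts to building a resolution of $R/I_\Delta$ by adjoining the squares $x_1^2,\dots,x_d^2$ one at a time via iterated mapping cones. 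At the $t$-th step the relevant colon ideal simplifies, using that $I_\Lambda$ is squarefree and the $x_j^2$ are pairwise coprime, to
\[
\bigl(I_\Lambda + (x_1^2,\dots,x_{t-1}^2)\bigr) : x_t^2 = (I_\Lambda : x_t) + (x_1^2,\dots,x_{t-1}^2),
\]
where $I_\Lambda : x_t$ is the Stanley--Reisner ideal of the disjoint union of simplices $\link_\Lambda(t)$ (Proposition~\ref{pro:icprop}(iv), $2$-linear) and, by~\eqref{colon}, the higher colons $I_\Lambda:x_G$ have $1$-linear resolutions. These linearity statements are precisely what force the comparison maps in each mapping cone to have all entries in $\fm$, so that the cones are minimal and the Betti numbers add with a degree shift by $2$. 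Summing the link contributions with Proposition~\ref{prop:betti e-linear} and simplifying with Lemma~\ref{lem:sum-formula} should produce the closed form $\sum_{k\ge j-i}\binom{k}{j-i}\binom{\#P-k}{2i-j}L_k$, in which, for a line with $k=\#\ell$ points, $\binom{k}{j-i}$ selects points on the line and $\binom{\#P-k}{2i-j}$ points off it, reflecting the Koszul-type syzygies among the squares attached to each line.

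I expect the $j=i+2$ strand to be the main obstacle, since there the long exact sequence genuinely couples the two modules:
\[
\Tor_i(K,M)_{i+2} \to \Tor_i(K,R/I_\Lambda)_{i+2} \to \Tor_i(K,R/I_\Delta)_{i+2} \to \Tor_{i-1}(K,M)_{i+2} \to 0.
\]
The hard part will be showing that the connecting maps behave as cleanly as the high strand does, so that $\beta_{i,i+2}(R/I_\Delta)$ is the honest sum of the degree-$(i+2)$ Betti number of $R/I_\Lambda$ (Theorem~\ref{thm:Betti lin space}) and the degree-$(i+2)$ part of the resolution of $M$; in the mapping-cone picture this is the assertion that adjoining the squares creates no nonminimal cancellation in the $3$-linear strand, which should again follow from the linear-resolution properties recorded above combined with the degree shift by $2$. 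Once the boundary maps are controlled, collecting the contributions from Theorem~\ref{thm:Betti lin space} and Proposition~\ref{prop:Betti link} and simplifying with Lemma~\ref{lem:sum-formula} yields the stated formula for $j=i+2$, while the $\beta_1$ row is read off directly from Proposition~\ref{prop:generators} and Remark~\ref{rem:number gen}.
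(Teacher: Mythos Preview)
Your approach is essentially the one the paper takes: build the resolution of $R/I_\Delta$ by iterated mapping cones over the Koszul complex on $x_1^2,\dots,x_d^2$, using that the colon ideals $I_\Lambda:x_G$ have linear resolutions ($1$-linear for $|G|\ge 2$ by \eqref{colon}, $2$-linear for $|G|=1$ since the link is a disjoint union of simplices). The one organizational difference is that the paper does not adjoin the squares one at a time but instead invokes a theorem of Mermin--Peeva--Stillman \cite[Theorem~2.1]{MPS}: this gives in one stroke the exact sequence $0\to F_d\to\cdots\to F_0=R/I_\Lambda\to R/I_\Delta\to 0$ with $F_j=\bigoplus_{|G|=j}\bigl(R/(I_\Lambda:x_G)\bigr)(-2j)$ and certifies that the iterated mapping cone resolution is \emph{minimal}, so that $\beta_{s,t}(R/I_\Delta)=\sum_{j}\beta_{s-j,t}(F_j)$ with no cancellation at all. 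That dissolves precisely the concern you flagged about the $j=i+2$ strand: the Betti number there is simply $\beta_{i,i+2}(F_0)+\beta_{i-1,i+2}(F_1)+\beta_{i-2,i+2}(F_2)$, and the three summands are read off from Theorem~\ref{thm:Betti lin space}, Proposition~\ref{prop:Betti link}, and \eqref{colon} respectively. Your degree argument---that the shift by $2$ together with the $e$-linearity of each piece forces every comparison map into $\fm$---is exactly what underlies the MPS minimality statement in this situation, so your sketch is sound; citing \cite{MPS} just replaces the nested one-square-at-a-time bookkeeping by a single reference.
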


\begin{proof} 
Set $d = \# P$. 
By \cite{MPS}, the Koszul complex on the sequence $x_1^2,\ldots,x_d^2$ induces  an exact sequence of graded $R$-modules 
\[
0 \to F_{d} \to \cdots \to F_1 \to F_0 = R/I_{\Lambda} \to R/I_{\Delta} \to 0, 
\]
where (in our notation)
\[
F_i = \bigoplus_{G \subset [d],\ |G| = i} R/(I_{\Lambda} :x_G) (-2i). 
\]

Moreover, using mapping cones repeatedly gives a graded minimal free resolution of $R/I_{\Delta}$ by \cite[Theorem 2.1]{MPS}, and thus, for all integers $s$ and $t$,  
\begin{equation}
            \label{eq:iterated mapping cone}
\beta_{s, t} (R/I_{\Delta}) = \sum_{j = 0}^s \beta_{s-j, t} (F_j). 
\end{equation}

We know the graded Betti numbers of $F_0$ by Theorem \ref{thm:Betti lin space}. Thus, in order to apply the above formula,  we now determine the graded Betti numbers of $F_j$ for each $j \ge 1$. 

We begin by considering $F_1$. Note that the ideal $I_{\Lambda} : x_p$ is the extension ideal in $R$ of the Stanley-Reisner ideal of the link of $\Lambda$ with respect to the vertex $p$. Hence, $I_{\Lambda} : x_p$ has the same graded Betti numbers as $I_{\link_{\Lambda} p}$, which has a 2-linear resolution by Proposition \ref{prop:Betti link}. It follows that, for $i \ge 1$, 
\begin{align*}
\beta_i (F_1) & = \beta_{i, i+3} (F_1) \\
& = \sum_{p \in P} \beta_i (K[\link_{\Lambda} p]) \\
& = \sum_{p \in P}  \left [ [ -1 + \# \{\ell \in L \; : \: p \in \ell\}] \binom{d-1}{i+1} - \sum_{p \in \ell} \binom{d - \# \ell}{i+1}\right ] \\[.5ex]
& = \left [-d + \sum_{\ell \in L} \# \ell \right ] \binom{d-1}{i+1} - \sum_{\ell \in L} \# \ell \binom{d - \# \ell}{i+1} \\[.5ex]
& = \left [-d + \sum_{k \ge 2} k \, L_k \right ] \binom{d-1}{i+1} - \sum_{k \ge 2} k \, L_k \binom{d - k}{i+1}. 
\end{align*}

Summarizing, the following formula gives all the graded Betti numbers of $F_1$: 
\begin{equation}
    \label{eq:Betti F1} 
\beta_i (F_1) = \begin{cases} 
\beta_{0, 2} (F_1) = d & \text{ if } i = 0 \\
\beta_{i, i+3} (F_1) = {\displaystyle \left [-d + \sum_{k \ge 2} k \, L_k \right ] \binom{d-1}{i+1} - \sum_{k \ge 2} k \, L_k \binom{d - k}{i+1} } & \text{ if } i \ge 1. 
\end{cases}     
\end{equation}

Now we consider $F_j$, where $j \ge 2$.  Using Formula \eqref{colon}, we obtain 
\[
F_j = \bigoplus_{G \subset [d], \, |G| = j} (R/I_{\Lambda} : x_G) (-2j) \cong \bigoplus_{\ell \in L, \, G \subset \ell} R/(x_1,\ldots,x_{d - \# \ell}) (-2j). 
\]

Each direct summand has a linear resolution. Thus, for all nonnegative integers $i$ and $j \ge 2$,
\begin{equation}
     \label{eq:Betti Fj for bigger j}
\begin{split}     
\beta_i (F_j)  = \beta_{i, i + 2j} (F_j)  & = \sum_{\ell \in L} \binom{\# \ell}{j} \binom{d - \# \ell}{i} \\
& = \sum_{k \ge j} \binom{k}{j} \binom{d-k}{i} L_k. 
\end{split}
\end{equation}

We are now ready to apply Formula \eqref{eq:iterated mapping cone}. For $s = 1$, we have
\begin{align*}
\beta_{1, t} (R/I_{\Delta}) & = \beta_{0, t} (F_1) + \beta_{1, t-1} (F_0) \\
& = \begin{cases}
\beta_{0, 2} (F_1) = d & \text{ if } t = 2 \\
\beta_{1, 3} (F_0)  & \text{ if } t = 3 \\
0 &  \text{ otherwise}.
\end{cases}
\end{align*}

Thus, Remark \ref{rem:number gen} gives the desired first graded Betti numbers of $R/I_{\Delta}$. 

Let now $s \ge 2$. Then the above formulas and Theorem \ref{thm:Betti lin space} 
 provide, for each integer $t$, 
\begin{align*}
\beta_{s, t} (R/I_{\Delta}) & = \begin{cases}
0 & \text{ it } t \le s+1 \\
\beta_{s, s+2} (F_0) + \beta_{s-1, s+2} (F_1) + \beta_{s-2, s+2} (F_2) & \text{ if } t = s+2 \\[.5ex]
\beta_{2s - t, t} (F_{t-s}) = \sum_{k \ge t - s} \binom{k}{t-s} \binom{d-k}{2s - t} L_k  & \text{ if } t \ge s+3.
\end{cases}
\end{align*}

Therefore, it only remains to determine $\beta_{s, s+2} (R/I_{\Delta})$. We have, for $s \ge 2$, 
\begin{align*}
\beta_{s, s+2} (R/I_{\Delta}) & = \sum_{k \ge 1}  \binom{d-k}{s+2} L_k - \binom{d}{s+2} [ - 1 + \# L]  \\
& \hspace*{0.6cm} + \binom{d-1}{s+1} \left [- d + \sum_{k \ge 2} k \, L_k  \right ] 
- \binom{d-2}{s}  \left [- \binom{d}{2} + \sum_{k \ge 2} \binom{k}{2} L_k  \right ] \\
& \hspace*{0.6cm} + \left [-d + \sum_{k \ge 2} k \, L_k \right ] \binom{d-1}{s} - \sum_{k \ge 2} k \, L_k \binom{d - k}{s} \\
& \hspace*{0.6cm} + \sum_{k \ge 2} \binom{k}{2} \binom{d-k}{s-2} L_k\\[.5ex]
& = \binom{d}{s+2} - d \binom{d-1}{s+1} + \binom{d}{2} \binom{d - 2}{s} - d \binom{d-1}{s} \\[.5ex]
& \hspace*{0.6cm} + \sum_{k \ge 2} L_k \left [ \binom{d-k}{s+2} - k \binom{d-k}{s} + \binom{k}{2} \binom{d-k}{s-2}  \right ] \\[.5ex]
&  \hspace*{0.6cm} + \sum_{k \ge 2} L_k \left [ - \binom{d}{s+2} + k \binom{d-1}{s+1} - \binom{k}{2} \binom{d-2}{s} + k \binom{d-1}{s}
 \right ], 
 \end{align*}
which implies the desired formula. 
\end{proof}

\begin{rem}
Some of the Betti numbers of $R/I_{\Delta}$ have an easy combinatorial interpretation. For instance, if $i \ge 2$ and $j \ge 3$, then the formula for $\beta_{i, j} (R/I_{\Delta})$ in Theorem \ref{thm:Betti w squares} can be rewritten as 
\[
\beta_{i, i+j} (R/I_{\Delta}) = \sum_{\ell \in L} \binom{\# \ell}{j} \binom{\# P - \# \ell}{i - j}. 
\]

Thus, $\beta_{i, i+j} (R/I_{\Delta})$ equals the number of choices of $i$ points in $P$ such that $j$ points are on one line in $L$ and $i - j$ points are off the chosen line. 
\end{rem}


We highlight again the case of a projective plane. 

\begin{cor}
    \label{cor:Betti proj plane w squares} 
For a projective plane of order $q$, the graded Betti numbers of     $R/I_{\Delta}$ are
\[
\beta_{1, j} (R/I_{\Delta}) = \begin{cases}
q^2 + q + 1 & \text{ if } j = 2 \\[.5ex]
\binom{q^2 + q + 1}{3} -  \binom{q+1}{3} (q^2 + q + 1) & \text{ if } j = 3 \\[.5ex]
0 & \text{ otherwise},
\end{cases}
\]     
and, for each integer $i \ge 2$, 
\[
\beta_{i, j} (R/I_{\Delta}) = \begin{cases}
0 & \text{ if } j \le i+1 \\[.9ex]
  - q (q + 1) \binom{q^2 + q + 1 }{i+2}  \\[.5ex]
 \hspace*{.2cm} + (q^2 + q + 1) \left [ \binom{q^2}{i+2} + q \binom{q^2 + q + 1}{i+1} - (q+1) \binom{q^2 }{i} + \binom{q+1}{2} \binom{q^2}{i-2}  \right ] & \text{ if } j = i+2  \\[.9ex]
(q^2 + q + 1)  \binom{q+1}{j - i} \binom{q^2}{2i- j} & \text{ if } j \ge i+3. 
\end{cases}
\]    
In particular, $\beta_{i, j} (R/I_{\Delta})$ is not zero if and only if 
\begin{enumerate}
\item $i = j = 0$ or $i+1 = j = 2$; 
\item $1 \le i \le q^2 + q$ and $j = i+2$; \quad or 
\item $i \ge 2$ and $\max \{i+3, 2i - q^2\} \le j \le \min \{2i, i+q+1\}$. 
\end{enumerate}
\end{cor}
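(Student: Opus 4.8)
The plan is to specialize Theorem \ref{thm:Betti w squares} to the projective plane of order $q$ and then read off the vanishing pattern. Recall that for such a plane we have $\#P = q^2 + q + 1$, every line has exactly $q+1$ points, and there are exactly $q^2 + q + 1$ lines; in the notation of the theorem this means $L_k = 0$ for all $k \neq q+1$ and $L_{q+1} = q^2 + q + 1$. The first task is therefore purely mechanical: substitute these values into each case of the theorem. For $\beta_{1,j}$ the substitution is immediate. For $j = i+2$ I would plug $k = q+1$ into the two summations (each reducing to a single term) and collect, using $\binom{\#P - k}{\bullet} = \binom{q^2}{\bullet}$ and $\binom{\#P}{\bullet} = \binom{q^2+q+1}{\bullet}$, to obtain the displayed closed form. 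For $j \ge i+3$ the formula $\sum_{k \ge j-i} \binom{k}{j-i}\binom{\#P - k}{2i - j} L_k$ collapses to the single term $(q^2+q+1)\binom{q+1}{j-i}\binom{q^2}{2i-j}$. This part is a straightforward computation of the kind invoked in Corollary \ref{cor:Betti proj plane}.

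The substantive part is establishing the nonvanishing classification (i)--(iii). I would argue case by case according to the value of $j$ relative to $i$. Cases (i) and the $j \le i+1$ clause handle the trivial and vanishing ranges directly from the formula. For the diagonal strand $j = i+2$ (case (ii)), I must show the displayed expression is nonzero precisely for $1 \le i \le q^2+q$; since this is the total Betti number $\beta_i(R/I_\Delta)$ in the $(i+2)$-degree, I expect to tie its vanishing to the projective dimension of $R/I_\Delta$, which equals $d = q^2+q+1$ because the Koszul factors in the proof of Theorem \ref{thm:Betti w squares} push the resolution out to length $d$. The upper bound $i \le q^2 + q$ should follow because $R/I_\Delta$ is artinian of socle degree $q+1$ and one checks the top nonzero Betti number occurs at homological degree $q^2+q$.

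The main work, and the likely obstacle, is the third strand $j \ge i+3$. Here the nonvanishing of $(q^2+q+1)\binom{q+1}{j-i}\binom{q^2}{2i-j}$ is governed entirely by when the two binomial coefficients are simultaneously positive. The coefficient $\binom{q+1}{j-i}$ is nonzero iff $0 \le j - i \le q+1$, i.e. $j \le i + q + 1$; and $\binom{q^2}{2i-j}$ is nonzero iff $0 \le 2i - j \le q^2$, i.e. $j \le 2i$ and $j \ge 2i - q^2$. Combining these with the standing hypothesis $j \ge i+3$ of this strand gives exactly the double inequality $\max\{i+3,\, 2i - q^2\} \le j \le \min\{2i,\, i+q+1\}$ of (iii). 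The only delicate point is to confirm that this range is consistent with, and disjoint from, the diagonal case $j = i+2$, and that no cancellation can make the expression vanish within the stated window; since both binomial coefficients and the factor $q^2+q+1$ are strictly positive throughout, there is no cancellation to worry about, so the classification reduces to the elementary interval arithmetic just described. I would close by remarking that, as expected, the three strands together account for the full Betti table, with the pure power of $q$, namely $\beta_{q^2+q}(R/I_\Delta)$, arising as the top of strand (ii).
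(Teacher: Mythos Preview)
Your approach is exactly the paper's: specialize Theorem \ref{thm:Betti w squares} using $L_{q+1} = q^2+q+1$ and $L_k = 0$ otherwise, then read off the support. The paper's proof is literally the single sentence ``This follows from Theorem \ref{thm:Betti w squares} by a straightforward computation,'' so your expanded treatment of the substitution and of case (iii), via positivity of the two binomial factors, is correct and entirely in the intended spirit.

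The one soft spot is your handling of case (ii). Neither the projective dimension (which is $q^2+q+1$, not $q^2+q$) nor the socle degree directly controls the support of a single linear strand, so the reasoning you sketch does not actually pin down where $\beta_{i,i+2}$ is nonzero. A clean fix: the iterated mapping cone in the proof of Theorem \ref{thm:Betti w squares} gives $\beta_{i,i+2}(R/I_\Delta) \ge \beta_{i,i+2}(K[\Lambda])$, and Corollary \ref{cor:Betti proj plane} shows the right side is positive for $1 \le i \le q^2+q-1$; then check $i = q^2+q$ directly from the formula (the only surviving term is $q\binom{q^2+q+1}{q^2+q+1}(q^2+q+1)$ plus a nonnegative contribution), and observe that every binomial coefficient in the displayed expression vanishes once $i \ge q^2+q+1$. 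Your closing remark about a ``pure power of $q$'' looks like a slip, perhaps imported from $\beta_{q^2+q-1}(K[\Lambda]) = q^3$ in Corollary \ref{cor:Betti proj plane}.
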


\begin{proof}
This  follows from Theorem \ref{thm:Betti w squares} by a straightforward computation. 
\end{proof}

Of course, analogous results to those in Corollaries~\ref{cor:Betti link proj plane} and~\ref{cor:Betti proj plane w squares} 
could also be stated, for instance, for finite affine planes. 

\begin{example}
To illustrate the above computation,  consider  a projective plane of order $q=2$. Then we have, using the above notation, 
\[
F_2 \cong  (R/(x_1,\dots,x_4))^{21} (-4), \ \ F_3 \cong  (R/(x_1,\dots,x_4))^{7}  (-6), \ \ F_j = 0 \; \text{ if } j \ge 4. 
\]

Hence, we obtain the following diagram: 
\[
\begin{array}{ccccccccccccccccc}

0 & \rightarrow & F_3 & \rightarrow & F_2 & \rightarrow & F_1 & \rightarrow & R/I_\Lambda & \rightarrow  & R/I_\Delta & \rightarrow & 0 \\
&& \uparrow && \uparrow &&\uparrow &  & \uparrow  \\
&& R(-6)^7 && R(-4)^{21} && R(-2)^7 && R \\
&& \uparrow && \uparrow &&\uparrow &  & \uparrow  \\
&& R(-7)^{28} && R(-5)^{84} && R(-4)^{84} && R(-3)^{28} \\
&& \uparrow && \uparrow &&\uparrow &  & \uparrow  \\
&& R(-8)^{42} && R(-6)^{126} && R(-5)^{196} && R(-4)^{77} \\
&& \uparrow && \uparrow &&\uparrow &  & \uparrow  \\
&& R(-9)^{28} && R(-7)^{84} && R(-6)^{189} && R(-5)^{84} \\
&& \uparrow && \uparrow &&\uparrow &  & \uparrow  \\
&& R(-10)^{7} && R(-8)^{21} && R(-7)^{84} && R(-6)^{42} \\
&& \uparrow && \uparrow &&\uparrow &  & \uparrow  \\
&& 0 && 0 && R(-8)^{14} && R(-7)^{8} \\
&&  &&  &&\uparrow &  & \uparrow  \\
&&&&&& 0 && 0

\end{array}
\]


It gives the following Betti table for $R/I_\Delta$:
\begin{verbatim}
        0    1     2     3     4     5    6   7
-------------------------------------------------------
 0:     1    -     -     -     -     -    -   -
 1:     -    7     -     -     -     -    -   -
 2:     -   28   182   364   357   176   35   -
 3:     -    -     -     7    28    42   28   7
-------------------------------------------------------
Tot:    1   25   182   371   385   218   63   7
 \end{verbatim}
 
 Notice that it is in accordance with Corollary \ref{cor:Betti proj plane w squares}. 
\end{example}

\begin{rem}
    Let $I$ be the inverse system of a finite projective plane.
    The last line of the Betti diagram of $R/I$ is
    \begin{center}
        \begin{tabular}{c|ccccccccccccccccccccccc}
        & $0$ & $1$ & $2$ & \dots & $q$ &  $q+1$ & $q+2$ & $q+3$ & \dots & $q^2+q-1$ & $q^2+q$ & $q^2+q+1$   \\ \hline \\
        $q+1$ & - & - & - & \dots & $0$ & $d$ & $d \cdot \binom{q^2}{q^2-1}$ & $d \cdot \binom{q^2}{q^2-2}$ & \dots & $d \cdot \binom{q^2}{2}$ & $d \cdot \binom{q^2}{1}$ & $d$
        \end{tabular}
    \end{center}
    While this follows as a consequence of the above, it also follows by looking at the minimal free resolution directly.

    We consider a minimal free resolution for the canonical module, $M$, which we will view as the inverse system module 
    generated by the monomials $m_i$. We know that $M$ is generated in the initial degree, and has $d$ minimal generators.  
    Furthermore,  each generator $m_i$ corresponding to a line $\ell_i$ is annihilated by the indeterminates  corresponding 
    to points that are not on $\ell_i$.  There are $q^2+q+1 - (q+1) = q^2$ such indeterminates.

    In the minimal free resolution of $M$, then, the first free module is $R^d$ (ignoring the twist). The linear syzygies 
    are exactly given by the annihilation just described. For each monomial $m_i$, $1 \leq i \leq d$, let 
    $x_{i,1},\dots,x_{i,q^2}$ be the indeterminates that annihilate $m_i$.
    Then the presentation matrix has the form
    \[
        \left [
            \begin{array}{ccccccccccccccccccccccccc}
                x_{1,1} & \dots & x_{1,q^2} & 0 & \dots & 0 & 0 & \dots & 0 & \dots & 0 & \dots & 0 & \dots    \\
                0 & \dots & 0 & x_{2,1} & \dots & x_{2,q^2} & 0 &  \\
                0 & \dots & 0  & 0 & \dots & 0 & x_{3,1} & \dots & x_{3,q^2} & \dots & 0 & \dots & 0 & \dots    \\
                \vdots  \\
                0 & \dots & 0 & 0 & \dots & 0 & 0 & \dots & 0 & \dots & x_{d,1} & \dots & x_{d,q^2} & \dots 
            \end{array}
        \right ]
    \]
    (where there may be additional syzygies of higher degree).  The syzygies of the columns of this matrix are precisely the 
    Koszul syzygies of a complete intersection of $q^2$ linear forms, and there are $d$ such complete intersections.
\end{rem}


\section{The Lefschetz Properties}\label{sec:lefschetz}

The purpose of this section is to determine in which characteristics of the base field $K$, which we will assume throughout to be infinite,  the
algebra $A=R/I_{\Delta}$ associated to a linear space, as defined in Section \ref{sec:is}, has the WLP or the SLP.  We will do this by applying a variety of tools coming from combinatorial commutative algebra to the structure of linear spaces.  The resulting characterizations will, in some cases, turn out to be extremely appealing. Namely, our main result, which assumes $A$ is  associated to a finite projective plane of order $q$, will 
show that $A$ has the WLP if and only if $\Char{K} = 0$ or $\Char{K} > \left\lceil \frac{q+1}{2} \right\rceil$ (as usual, $\lceil x \rceil$ denotes
the least integer $\geq x$), while $A$ has the SLP if and only if $\Char{K} = 0$ or $\Char{K} > q+1$ (see Theorem~\ref{main} (i) and (iv)).

\begin{rem}
    By Theorem~\ref{thm:Betti lin space}, for any linear space the algebra $R/I_{\Lambda}$ has positive depth. Hence, it always has both Lefschetz Properties. Indeed (if the base field is infinite) the fact that the depth of $R/I_{\Lambda}$ is positive guarantees the existence of a \emph{linear} nonzero divisor in $R/I_{\Lambda}$, which immediately gives us that all maps defining the Lefschetz Properties are injective.
\end{rem}

We first  present some preliminary results and helpful facts.

\begin{lemma}[\cite{MMN-mon}]\label{mmn}
    Let $A$ be any (standard graded) artinian algebra, and let $\ell$ be a general linear form.  
    Consider the maps $\times \ell : A_i \rightarrow A_{i+1}$ defined by multiplication by $\ell$, for $i \geq 0$.  Then:
    \begin{enumerate}
        \item If $\times \ell$ is surjective from some degree $i$ to degree $i+1$, then $\times \ell$ is surjective in all subsequent degrees.
        \item If $A$ is level and $\times \ell$ is injective from some degree $i$ to degree $i+1$, then $\times \ell$ is injective in all previous degrees.
        \item In particular, if $A$ is level and $\dim_K A_{i} = \dim_K A_{i+1}$, then $A$ has the WLP if and only if $\times \ell$ is injective (i.e., bijective) from  degree $i$ to degree $i+1$.
    \end{enumerate}
\end{lemma}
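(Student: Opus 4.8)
The plan is to establish the three parts in order, obtaining (ii) as the Matlis dual of (i) and then deducing (iii) by a purely formal rank count; throughout, $\ell$ is the fixed general linear form of the statement and I write $A = \bigoplus_{i=0}^e A_i$. For part (i), I would reinterpret surjectivity as a vanishing statement for the quotient algebra $B = A/\ell A$. Indeed, $\times \ell \colon A_i \to A_{i+1}$ is surjective exactly when $\ell A_i = A_{i+1}$, that is, when $B_{i+1} = 0$. Since $B$ is again standard graded, it is generated in degree $1$, so $B_j = B_1 \cdot B_{j-1}$ for every $j \ge 1$. Hence $B_{i+1} = 0$ forces $B_{i+2} = 0$ and, inductively, $B_j = 0$ for all $j \ge i+1$; equivalently, $\times \ell \colon A_{j-1} \to A_j$ is surjective for every $j \ge i+1$, which is exactly surjectivity in all subsequent degrees.

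For part (ii), I would dualize the argument of (i). Let $N = \mathrm{Hom}_K(A, K)$ be the graded Matlis dual, with $N_{-j} = (A_j)^*$ and $A$-module structure $(a \cdot \varphi)(x) = \varphi(ax)$. The transpose of $\times \ell \colon A_i \to A_{i+1}$ is precisely the module map $\times \ell \colon N_{-i-1} \to N_{-i}$, and a linear map is injective if and only if its transpose is surjective; thus injectivity of $\times \ell \colon A_i \to A_{i+1}$ is equivalent to surjectivity of $\times \ell \colon N_{-i-1} \to N_{-i}$. The crucial use of the hypothesis enters here: $A$ being \emph{level} means its socle is concentrated in the top degree $e$, which under Matlis duality translates to $N$ being generated as an $A$-module in the single lowest degree $-e$. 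I would then rerun the propagation of (i) for the module $C = N/\ell N$ over the standard graded algebra $A/\ell A$, noting that $C$ is still generated in degree $-e$: surjectivity of $\times \ell \colon N_{-i-1}\to N_{-i}$ says $C_{-i} = 0$, whence $C_{-i+1} = (A/\ell A)_1 \cdot C_{-i} = 0$ and, inductively, $C_j = 0$ for all $-i \le j \le 0$. Reading this back through the duality gives injectivity of $\times \ell \colon A_{j-1}\to A_j$ for every $j \le i$, as required.

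Finally, part (iii) follows formally, and this is where I would combine the two. If $A$ has the WLP then $\times \ell \colon A_i \to A_{i+1}$ has maximal rank, and since $\dim_K A_i = \dim_K A_{i+1}$ maximal rank forces bijectivity, in particular injectivity. Conversely, if $\times \ell \colon A_i \to A_{i+1}$ is injective then it is bijective by equality of dimensions, hence also surjective; part (ii) then makes $\times \ell \colon A_{j-1} \to A_j$ injective, and so of maximal rank, for all $j \le i$, while part (i) makes it surjective, and so of maximal rank, for all $j \ge i+1$, giving the WLP. I expect the only genuinely delicate point to be part (ii): keeping the grading straight under Matlis duality and correctly translating ``level'' into ``$N$ is generated in a single degree.'' Once that dictionary is fixed, (ii) is literally (i) applied to the module $N$, and (iii) is a routine rank bookkeeping.
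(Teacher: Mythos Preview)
The paper does not actually prove this lemma; it is stated with a citation to \cite{MMN-mon} and used as a black box, so there is no ``paper's own proof'' to compare against. Your argument is correct and is essentially the standard one found in the literature: part (i) is the usual observation that $A/\ell A$ is standard graded, so vanishing in one degree forces vanishing thereafter; part (ii) is obtained from (i) by Matlis duality, using that ``level'' is precisely the condition that the dual module $N$ is generated in a single degree (so that $N_j = A_1 \cdot N_{j-1}$ for $j > -e$, which is what makes the propagation go through for $N/\ell N$); and part (iii) is the formal rank bookkeeping you describe. The one place worth a second look is your translation of the level hypothesis under duality, and you have it right: the minimal generators of $N$ as an $A$-module sit in the degrees dual to the socle of $A$, so socle concentrated in degree $e$ is exactly $N$ generated in degree $-e$.
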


\begin{rem} 
    If the algebra $A$ is \emph{monomial}, it was shown in  \cite[Proposition 2.2]{MMN-mon} that, without loss of generality, 
    one may simply assume, in studying the existence of the WLP or of the SLP, that the linear form $\ell$ is the sum of the
    indeterminates.  Thus, from now on, when we refer to $\ell$, it will be to the specific linear form $\ell=x_1+\dots +x_d$.
\end{rem}

We also recall that, for the ideal of the squares of the variables, the presence of the WLP and SLP  admits a nice classification.

\begin{lem}\label{lem:squares}
    If $A = K[x_1, \ldots, x_d] / (x_1^2, \ldots, x_d^2)$, then
    \begin{enumerate}
        \item $A$ has the WLP if and only if $\Char{K} = 0$ or $\Char{K} > \left \lceil \frac{d}{2} \right \rceil$.
        \item $A$ has the SLP if and only if $\Char{K} = 0$ or $\Char{K} > d$.
    \end{enumerate}
\end{lem}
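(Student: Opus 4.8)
The plan is to work throughout with the specific linear form $\ell = x_1 + \cdots + x_d$, which is permissible by the preceding remark, and to make every multiplication map completely explicit in the monomial bases. Writing a basis of $[A]_i$ as the squarefree monomials $x_S$ indexed by the $i$-subsets $S \subseteq \{1,\dots,d\}$, the starting computation is that $\ell^s \equiv s!\,\sum_{|U| = s} x_U \pmod{(x_1^2,\dots,x_d^2)}$, because modulo the squares only the squarefree terms of the multinomial expansion survive, each with coefficient $s!$. Consequently $x_S \cdot \ell^s = s!\,\sum_{T \supseteq S,\, |T| = i+s} x_T$, so the map $\times \ell^s \colon [A]_i \to [A]_{i+s}$ is, in these bases, exactly $s!$ times the inclusion matrix $W_{i,\,i+s}$ whose $(T,S)$ entry is $1$ when $S \subseteq T$ and $0$ otherwise. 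Thus both Lefschetz questions reduce to a single combinatorial input: the rank over $K$ of the inclusion matrices of subsets.

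For the SLP I would argue as follows. Necessity is immediate from the top map: since $[A]_0$ and $[A]_d$ are one-dimensional and $\times \ell^d$ is multiplication by $d!$, the SLP forces $d! \neq 0$ in $K$, i.e. $\Char K = 0$ or $\Char K > d$. For sufficiency I would invoke the Smith normal form of the inclusion matrices (Wilson's diagonal form), whose invariant factors for $W_{t,k}$ are the binomials $\binom{k-a}{t-a}$, $0 \le a \le t$. Hence the invariant factors of $s!\,W_{i,i+s}$ are the products $s!\binom{i+s-a}{i-a} = (i-a+1)(i-a+2)\cdots(i+s-a)$ of at most $d$ consecutive integers, all of size at most $d$; when $\Char K = 0$ or $\Char K > d$ none of their prime factors vanishes in $K$, so every $\times \ell^s$ has maximal rank and $A$ has the SLP. (Alternatively, using that a Gorenstein algebra has the SLP iff its central maps are bijective, one may reduce via Lemma~\ref{mmn} to the maps $\times \ell^{d-2i}\colon [A]_i \to [A]_{d-i}$, whose matrix is $(d-2i)!$ times the disjointness matrix of $i$-subsets; the latter is the adjacency matrix of a Kneser graph, whose eigenvalues $(-1)^j\binom{d-i-j}{i-j}$ again have only prime factors $\le d$.)

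For the WLP the relevant maps are the consecutive ones $\times \ell\colon [A]_i \to [A]_{i+1}$, whose matrix is simply $W_{i,i+1}$. Since $\binom di = \binom{d}{d-i}$, the algebra is Gorenstein, so the map $[A]_i \to [A]_{i+1}$ is, up to the duality pairing, the transpose of $[A]_{d-i-1} \to [A]_{d-i}$; surjectivity in the upper half is thus equivalent to injectivity in the lower half, and together with parts (i) and (ii) of Lemma~\ref{mmn} this lets me concentrate on the single ``middle'' map $[A]_t \to [A]_{t+1}$ with $t = \lfloor (d-1)/2\rfloor$. Wilson's diagonal form gives the invariant factors of $W_{i,i+1}$ as $\binom{i+1-a}{i-a} = i+1-a$ for $0 \le a \le i$, so this map has full rank over $K$ exactly when none of $1, 2, \dots, i+1$ vanishes in $K$, i.e. when $\Char K = 0$ or $\Char K > i+1$. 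Taking the most restrictive value $i = t$ gives $i+1 = \lceil d/2 \rceil$, so $A$ has the WLP if and only if $\Char K = 0$ or $\Char K > \lceil d/2 \rceil$.

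The main obstacle is the rank computation for the inclusion matrices in positive characteristic, and in particular the even case $d = 2m$ of the WLP. There the critical map $[A]_{m-1} \to [A]_m$ is not square, and the naive device of passing to a square operator such as $W^{\mathsf T}W$ or the composite $DU$ of the up and down operators overshoots the threshold: these square operators already fail to be invertible at $\Char K = m+1$, whereas the map itself remains injective there. This is precisely why one cannot avoid the finer arithmetic of the invariant factors of the (non-square) inclusion matrices, which is the content of Wilson's theorem; everything else in the argument is bookkeeping with binomial coefficients.
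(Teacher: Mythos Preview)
Your argument is correct, and it takes a genuinely different route from the paper. The paper does not prove this lemma at all: it simply cites Stanley's theorem for characteristic zero, Kustin--Vraciu \cite{KV} for the WLP in positive characteristic, and \cite{cook} for the SLP in positive characteristic. You instead give a unified, self-contained proof by expanding $\ell^s$ modulo the squares, identifying each multiplication map with $s!$ times a subset-inclusion matrix $W_{i,i+s}$, and then reading off the rank over $K$ from Wilson's diagonal form for these matrices. The Gorenstein duality reduction to the central map, together with Lemma~\ref{mmn}, cleanly isolates the single critical inclusion matrix in each case, and your computation of the invariant factors $s!\binom{i+s-a}{i-a}$ as products of consecutive integers bounded by $d$ (respectively $\lceil d/2\rceil$) gives exactly the stated thresholds.

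What your approach buys is transparency and uniformity: one combinatorial input (Wilson's theorem) handles both properties and both the ``if'' and ``only if'' directions simultaneously, whereas the paper's route assembles three separate results with rather different proofs. Your closing remark about why one cannot simply square the non-square middle map in the even case is also apt and explains why the finer arithmetic of the Smith normal form is genuinely needed. Two small points worth tightening: Wilson's theorem is stated for $t\le k\le d-t$, so you should note explicitly that Gorenstein duality lets you assume $2i+s\le d$ before invoking it in the SLP step (you do this implicitly); and in the parenthetical alternative, the reduction of the SLP to bijectivity of the symmetric maps $\times\ell^{\,d-2i}$ is the standard Hard Lefschetz characterization for Gorenstein algebras rather than a consequence of Lemma~\ref{mmn} per se.
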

\begin{proof}
    The characteristic zero portion of each part follows from Stanley's theorem in \cite{St2}.

    The positive characteristic portion of (i) follows from Theorem 6.4 of \cite{KV}, and the positive characteristic portion of
    (ii) follows from one of Theorem 5.5, Proposition 6.7, and Corollary 6.5 of \cite{cook} (see also Theorem 7.2 therein) depending on $d$.
\end{proof}

Using the same notation as in the previous section, let us now consider the artinian monomial algebra $A = R/I_{\Delta} = K[x_1,\dots,x_d]/I_{\Delta} $ 
corresponding to a linear space $(P, L)$ with $d = \#P$.  Notice that $A$ is level if and only if  $(P, L)$ is equipointed.

Hence Lemma~\ref{lem:squares} classifies the case when  $\#L = 1$ (which includes the case $d=2$). In particular, if $d=2$  then $A$ always has the WLP, and has the SLP if and only if $\Char{K} \neq 2$.
We thus assume for the remainder of the section that $d > 2$.

We first note that the second and last map for the WLP are simple.

\begin{lemma}\label{ciao}
  Suppose that the largest number of points on any line is $m$.
    \begin{enumerate}
        \item The map $\times \ell :A_1 \rightarrow A_2$ is injective if and only if $\Char{K} \neq 2$.
        \item The map $\times \ell :A_{m-1} \rightarrow A_{m}$ is surjective in all characteristics, for $m \geq 3$.
    \end{enumerate}
\end{lemma}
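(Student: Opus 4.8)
The plan is to compute both maps directly on the natural monomial basis of $A$, which by the discussion following Proposition~\ref{prop:generators} consists in each degree $i$ of the squarefree monomials $x_{p_1}\cdots x_{p_i}$ whose underlying point set is a face of $\Lambda$, i.e., a set of collinear points. Throughout I use the specified form $\ell = x_1 + \cdots + x_d$ together with the relations $x_j^2 = 0$ in $A$.

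For part (i), note that $A_1$ has basis $x_1, \ldots, x_d$ while $A_2$ has as basis all $\binom{d}{2}$ monomials $x_i x_j$ with $i < j$ (every pair of points is collinear, so each such monomial is nonzero). First I would expand, for a general $v = \sum_i c_i x_i$,
\[
\ell \cdot v = \sum_{i < j} (c_i + c_j)\, x_i x_j,
\]
where the squares drop out. Thus $v \in \ker(\times \ell)$ precisely when $c_i + c_j = 0$ for all $i < j$. Since $d > 2$, picking three distinct indices and solving the resulting system forces $2 c_i = 0$ for every $i$. If $\Char{K} \neq 2$ this gives $v = 0$, so the map is injective; if $\Char{K} = 2$ the conditions collapse to $c_i = c_j$ for all $i, j$, so $\ell$ itself lies in the kernel (indeed $\ell^2 = 0$) and injectivity fails.

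For part (ii), I first identify $A_m$. Since $m$ is the maximal line size, the $f$-vector from Proposition~\ref{pro:icprop} gives $\dim_K A_m = \sum_{\lambda \in L} \binom{\#\lambda}{m}$, which counts exactly the lines $\lambda$ with $\#\lambda = m$; a basis of $A_m$ is then $\{x_\lambda = \prod_{p \in \lambda} x_p : \#\lambda = m\}$. It therefore suffices to hit each such $x_\lambda$. Fixing such a line and relabeling its points so that $x_\lambda = x_1 \cdots x_m$, I would set $u = x_2 \cdots x_m \in A_{m-1}$ and evaluate $\ell \cdot u = \sum_{i=1}^d x_i u$ term by term: for $i \in \{2, \ldots, m\}$ the term vanishes since $x_i^2 = 0$; the term $i = 1$ yields $x_\lambda$; and for $i \notin \{1, \ldots, m\}$ the term is zero in $A$. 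Hence $\ell \cdot u = x_\lambda$, and summing over all lines of size $m$ shows surjectivity.

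The one step that genuinely uses the hypotheses is the vanishing of these off-line terms, and this is where I expect the only subtlety: it requires $m \geq 3$, so that the $m - 1 \geq 2$ points indexing $u$ already determine a unique line (namely the fixed one, by the linear-space axiom), forcing any point $p_i$ not on it to be noncollinear with them and hence the monomial $x_i u$ to be a nonface. Everything else is bookkeeping with the monomial basis; crucially, the coefficient of $x_\lambda$ in $\ell \cdot u$ is exactly $1$, so the argument is characteristic-free and establishes surjectivity of $\times \ell : A_{m-1} \to A_m$ in all characteristics.
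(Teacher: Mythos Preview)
Your proof is correct and follows essentially the same approach as the paper. For (i) you work directly with the kernel equations $c_i + c_j = 0$ rather than the paper's column-dependence argument, which is a bit cleaner but amounts to the same linear-algebra computation; for (ii) your argument is exactly the paper's (showing $\ell \cdot x_{i_1}\cdots x_{i_{m-1}} = x_{i_1}\cdots x_{i_m}$ via $x_i^2=0$ and uniqueness of the line through $m-1\ge 2$ points), only spelled out in more detail.
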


\begin{proof}
    (i) Let us choose as a basis for $A_1$ the (homomorphic images of the) indeterminates of $R$, and as a basis for $A_2$ the 
    squarefree monomials of degree 2. Thus, the matrix associated to $\times \ell$ with respect to these bases is a 
    $\binom{d}{2} \times d$ incidence matrix, whose entries are $0$ or $1$ depending on whether the indeterminate divides the 
    degree $2$ monomial.  In particular, each row contains exactly two $1$'s, and each column contains exactly $(d-1)$ $1$'s.
    
    It is easy to see that injectivity fails if and only if the columns of $\times \ell$ are linearly dependent, if and only if 
    the first column is a linear combination of the others.  A brief thought gives that this is equivalent to the first column
    being equal to the sum of the other columns, in order to account for the $1$'s.  But then, in any fixed row corresponding 
    to a $0$ in the first column, there are precisely two $1$'s, neither of which is in the first column.  The sum of these $1$'s
    is therefore 0, which can occur if and only if the characteristic of $K$ is $2$, as desired.

    (ii) For any nonzero element $f = x_{i_1} \cdots x_{i_{m}}$ of $A_m$, it easy to see that $f = \ell \cdot x_{i_1} \cdots x_{i_{m-1}}$, since $x_i^2 = 0$ for all $i$ and there exists at most one line passing through any $m-1$ points.
    This proves the result.
\end{proof}
   
\begin{cor}
    If every line contains exactly two points (i.e. $A$ has socle degree $2$) then $A$ has the WLP (SLP) if and only if $\Char{K} \neq 2$.
\end{cor}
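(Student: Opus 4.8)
The plan is to exploit the fact that, under this hypothesis, the algebra $A$ has socle degree $2$, so that the conditions defining the Lefschetz properties reduce to checking just one or two multiplication maps. First I would observe that in a linear space in which every line has exactly two points, the axiom that any two distinct points lie on a unique line forces $L$ to consist of all $\binom{d}{2}$ pairs of points. Consequently $I_{\Delta}$ contains every square $x_i^2$ and every squarefree cubic, hence all monomials of degree $\geq 3$, and the Hilbert function of $A$ is $\left(1,\, d,\, \binom{d}{2}\right)$. In particular $A$ is level (the space is equipointed) with socle degree $2$, and $\dim_K A_2 = \binom{d}{2} \geq d = \dim_K A_1$ for $d > 2$, with equality exactly when $d = 3$.

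For the WLP, I would enumerate the maps $\times \ell \colon A_i \to A_{i+1}$. Since $A_i = 0$ for $i \geq 3$, the only maps that are not automatically of maximal rank are $\times \ell \colon A_0 \to A_1$ (injective because $\ell \neq 0$) and $\times \ell \colon A_2 \to A_3 = 0$ (trivially surjective), leaving a single genuine map to analyze, namely $\times \ell \colon A_1 \to A_2$. Because $\dim_K A_2 \geq \dim_K A_1$, maximal rank for this map is exactly injectivity, and Lemma~\ref{ciao}(i) tells us this holds if and only if $\Char K \neq 2$. This settles the WLP half of the statement.

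For the SLP, the same socle-degree-$2$ bookkeeping leaves, in addition to the map above, only one new nontrivial power map to check: $\times \ell^2 \colon A_0 \to A_2$. Since $A_0$ is one-dimensional, maximal rank here means $\ell^2 \neq 0$ in $A$, and a direct computation gives $\ell^2 = \sum_i x_i^2 + 2\sum_{i<j} x_i x_j = 2 \sum_{i<j} x_i x_j$ in $A$ (the squares vanish), which is nonzero precisely when $\Char K \neq 2$. Every other power map either lands in a zero graded component or coincides with a map already handled, so the SLP holds if and only if both the injectivity of $\times \ell$ on $A_1$ and the nonvanishing of $\ell^2$ hold, and both amount to $\Char K \neq 2$.

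The argument carries no serious obstacle: the content lies entirely in recognizing that socle degree $2$ collapses the Lefschetz conditions to the single incidence-matrix computation of Lemma~\ref{ciao}(i) together with the trivial evaluation of $\ell^2$. The only point requiring a moment's care is confirming that in every admissible case (including $d = 3$, where $\dim_K A_1 = \dim_K A_2$) maximal rank of $\times \ell \colon A_1 \to A_2$ coincides with injectivity, which is immediate from $\dim_K A_2 \geq \dim_K A_1$.
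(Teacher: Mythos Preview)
Your argument is correct and is exactly the intended one: the paper states this as an immediate corollary of Lemma~\ref{ciao} without writing out a proof, and your write-up simply unfolds that. The only additional step you supply beyond what the paper makes explicit is the direct computation of $\ell^2$ for the SLP, which is entirely in the spirit of the paper's later argument in Theorem~\ref{main}(ii) and poses no difficulty.
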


Thus we may assume for the remainder of the section that the socle degree $m$ is at least $3$.


\subsection{Equipointed linear spaces}

Suppose $A$ is level, i.e., suppose every line in $L$ has exactly $m$ points.  In this case, each of the $d$ vertices is part of $\frac{d-1}{m-1}$ lines,
each of which is made of $m$ vertices, i.e., $\#L = \frac{d(d-1)}{m(m-1)}$.  Thus the $h$-vector of $A$ is
\[
    \left(1, d, \frac{d(d-1)}{m(m-1)} \binom{m}{2}, \ldots, \frac{d(d-1)}{m(m-1)} \binom{m}{m} \right),
\]
which is unimodal with peak in degree $\frac{m}{2}$ if $m$ is even and peaks in degrees $\frac{m-1}{2}$ and $\frac{m+1}{2}$ if $m$ is odd.

\begin{rem}\label{deg2}
Let $C$ be an artinian complete intersection generated by the squares of $m$ indeterminates. 
Let $B$ be the $R$-submodule of $A$, defined by  $B = \oplus_{j=2}^m [A]_j$,  and similarly let $D = \oplus_{j=2}^m [C]_j$. Then $B$ decomposes, as an $R$-module, as a direct sum of $\# L$ copies of $D$.
    This is easily shown via inverse systems using the monomial 
    generators corresponding to the $\#L$ lines, with the direct sum being a consequence of the fact that any two points lie on a unique line. 
    As a result of this, the injectivity and/or surjectivity of most maps $\times \ell^j$ on $A$  follows from the corresponding result 
    for a monomial complete intersection of quadrics. More precisely, we have that \emph{the map $\times \ell^j$ between  $A_i$ and $A_{i+j}$ 
    has maximal rank if and only if the corresponding map between $C_i$ and $C_{i+j}$ has maximal rank, for any given $i\ge 2$ and $j\ge 0$}.
\end{rem}

We are now ready for the main theorem of this section.


\begin{thm}\label{main}
    Suppose $A$ is level with socle degree $m \geq 3$, coming from a linear space with $\#L \geq 2$.
    \begin{enumerate}
        \item $A$ has the WLP if and only if $\Char{K} = 0$ or $\Char{K} > \left\lceil \frac{m}{2} \right\rceil$.
        \item $A$ does not have the SLP if $2 \leq \Char{K} \leq m$.
        \item $A$ has the SLP if $\Char{K} = 0$ or $\Char{K} > \max\{m, \frac{d-1}{m-1} \} = \frac{d-1}{m-1} $.
        \item If $A$ is associated to a finite projective plane, then 
            $A$ has the SLP if and only if $\Char{K} = 0$ or $\Char{K} > m$.
    \end{enumerate}
\end{thm}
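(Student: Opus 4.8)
The plan is to reduce, as far as possible, every multiplication map $\times\ell^j$ on $A$ to the corresponding map on a monomial complete intersection of quadrics, for which Lemma~\ref{lem:squares} already settles both Lefschetz properties, and then to handle directly the few maps out of degrees $0$ and $1$ that this reduction does not reach. Throughout I take $\ell = x_1 + \cdots + x_d$, write $C$ for the complete intersection $K[y_1,\dots,y_m]/(y_1^2,\dots,y_m^2)$ of $m$ squares, and set $r = \frac{d-1}{m-1}$, the number of lines through a point. By Remark~\ref{deg2}, for all $i \ge 2$ and $j \ge 0$ the map $\times\ell^j\colon A_i \to A_{i+j}$ has maximal rank if and only if the corresponding map on $C$ does; since $A$ and $C$ share socle degree $m$, only the maps issuing from $A_0$ and $A_1$ require separate attention.

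For part (i), the WLP concerns only the maps $A_i \to A_{i+1}$. The map $A_0 \to A_1$ is always injective, and by Lemma~\ref{ciao}(i) the map $A_1 \to A_2$ (which lies in the injective range, as $d \le \binom{d}{2}$) is injective exactly when $\Char K \neq 2$; the maps with $i \ge 2$ agree with those of $C$ by Remark~\ref{deg2}. The same description applies to $C$: its single low map $C_1 \to C_2$ is the vertex--edge incidence matrix of the complete graph on $m$ vertices, which by the argument of Lemma~\ref{ciao}(i) is injective precisely when $\Char K \neq 2$. Hence $A$ has the WLP if and only if $C$ does, and Lemma~\ref{lem:squares}(i) gives the characterization $\Char K = 0$ or $\Char K > \lceil m/2\rceil$; note this condition forces $\Char K \neq 2$, consistent with the degree-$1$ constraint.

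Part (ii) is a direct obstruction. Expanding $\ell^m$ in $A$ and using $x_i^2 = 0$ leaves only the squarefree degree-$m$ monomials, namely the line monomials, each carrying the multinomial coefficient $m!$; thus $\ell^m = m!\sum_{\ell'\in L} x_{\ell'}$, which vanishes whenever $2 \le \Char K \le m$. Then $\times\ell^m\colon A_0 \to A_m$ is the zero map between two nonzero spaces, so the SLP fails. Part (iv) follows at once from (ii) and (iii): for a projective plane $m = q+1$ and $r = \frac{d-1}{m-1} = q+1 = m$, so $\max\{m,r\} = m$; then (iii) yields the SLP for $\Char K = 0$ or $\Char K > m$, while (ii) excludes it for $2 \le \Char K \le m$.

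The heart of the matter, and the main obstacle, is part (iii). Assume $\Char K = 0$ or $\Char K > r$; since $r \ge m$ (equivalently $\#L \ge \#P$, the De~Bruijn--Erd\H{o}s inequality, which is exactly the asserted equality $\max\{m,r\}=r$), this gives $\Char K > m$. The maps from $A_i$ with $i \ge 2$ then have maximal rank because $C$ has the SLP by Lemma~\ref{lem:squares}(ii), and the maps $\times\ell^j\colon A_0 \to A_j$ are injective since $\ell^j = j!\sum_F x_F \neq 0$ for $j \le m < \Char K$. For the maps out of $A_1$, every target satisfies $\dim A_{1+j} = L\binom{m}{1+j} \ge L \ge d = \dim A_1$, so maximal rank means injectivity. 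Projecting $\times\ell^j$ onto the line summand of the decomposition of Remark~\ref{deg2} attached to a line $\ell'$, a direct computation identifies it with the map $\times(\ell_{\ell'})^j$ of the complete intersection $C_{\ell'}$ on the points of $\ell'$, where $\ell_{\ell'}$ is the sum of the variables on $\ell'$; for $1+j \le m-1$ this per-line map is injective by the SLP of $C_{\ell'}$, and as every point lies on a line this forces global injectivity. The single map escaping this scheme is $\times\ell^{m-1}\colon A_1 \to A_m$, whose matrix is a nonzero scalar times the point--line incidence matrix $N$. Here the decisive computation is that $N^{\top}N = (r-1)I + J$ has determinant $(r-1)^{d-1}\,rm$ (using the identity $d + r - 1 = rm$), which is nonzero in $K$ precisely because $\Char K > r$ divides none of $r-1$, $r$, $m$; this yields injectivity of the last degree-$1$ map and completes the proof of the SLP.
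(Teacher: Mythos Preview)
Your proof is correct and follows the same architecture as the paper's: reduce degrees $\ge 2$ to the complete intersection $C$ via Remark~\ref{deg2} and Lemma~\ref{lem:squares}, handle degree~$0$ by the nonvanishing of $\ell^j$, and treat the map $\times\ell^{m-1}\colon A_1\to A_m$ via the determinant of $N^{\top}N=(r-1)I+J$. Two local differences are worth noting. For the remaining degree-$1$ maps ($1\le j\le m-2$), the paper uses a one-line bootstrap---if $\ell^j x=0$ then $\ell^{m-1}x=\ell^{m-1-j}\cdot\ell^j x=0$, contradicting the injectivity just established---whereas you set up a per-line projection and invoke the SLP of each $C_{\ell'}$; both work, but the paper's argument is shorter and avoids re-checking the factorization through $(C_{\ell'})_1$. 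For part~(iv), you derive the projective-plane case cleanly from (ii) and (iii) using $r=m$, while the paper instead exploits that the incidence matrix is square to compute $\det B$ directly; your route is arguably tidier, since the two earlier parts already pin down every characteristic.
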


\begin{proof}
    (i) If $m=3$, notice that, since the algebra $A$ has socle degree $m=3$, by Lemma \ref{ciao} we easily have that $A$ 
    has the WLP if and only if $\Char{K} \neq 2$, which proves the statement.

    Let $m=4$. Now the socle degree of $A$ is $m=4$ and, by Lemma \ref{ciao}, injectivity fails for $A$ from degree 
    $1$ to degree $2$ if and only if $\Char{K} = 2$. Since the $h$-vector of $A$ reaches its unique peak in degree $2$, 
    by Lemma \ref{mmn} it  suffices to determine the characteristics in which surjectivity fails for $A$ from degree $2$ 
    to degree $3$. But by Remark \ref{deg2}, this is equivalent to determining the same result for the algebra 
    $C=K[x_1,\dots,x_4]/(x_1^2,\dots,x_4^2)$. This is essentially done in \cite[Theorem 5.1]{KV}, where it is shown that $C$ 
    has the WLP in any characteristic larger than $2$ (in the notation of \cite[Formula (4.3)]{KV},  set $k=q=r=1$ and $d=2$ to 
    immediately obtain  the result). Thus, since in characteristic zero $C$ is well known to have the SLP and therefore the WLP 
    (see Stanley's paper \cite{St2}), we conclude that $A$ has the WLP if and only if $\Char{K} \neq 2$, as desired.

    Suppose now that $m\ge 5$ is odd. Notice that the $h$-vector of $A$ is unimodal with two peaks in its middle degrees, 
    $\frac{m-1}{2}$ and $\frac{m+1}{2}$. Therefore, by Lemmas \ref{mmn} and \ref{ciao}, it suffices to show in which characteristics  
    the map $\times \ell$ is injective (i.e., bijective) from degree  $\frac{m-1}{2}$ to degree $\frac{m+1}{2}$.
    By Lemma~\ref{lem:squares}, the algebra $C = K[x_1,\dots,x_{m}]/(x_1^2,\dots,x_{m}^2)$ has the WLP if and only if 
    $\Char{K} = 0$ or $\Char{K} > \left \lceil \frac{m}{2} \right \rceil$.  Thus the result follows from Remark~\ref{deg2}.

    Finally, let $m \ge 6$ be even. Hence, the $h$-vector of $A$ is unimodal with a unique peak in degree $\frac{m}{2}$. Reasoning in a 
    similar fashion to the previous cases, it is  enough to determine in which characteristics  the map $\times \ell$ is injective from 
    degree $\frac{m-2}{2}$ to degree $\frac{m}{2}$, and in which characteristics it is surjective from degree 
    $\frac{m}{2}$ to degree $\frac{m+2}{2}$.  But this can again be achieved by invoking Remark \ref{deg2}, then Lemma~\ref{lem:squares}.

    (ii) First, consider the maps $\ell^j: A_0 \rightarrow A_j$.  Proving these maps are all injective is tantamount to showing that the form
    $\ell^j$ itself is nonzero.  If we fix any squarefree monomial $M$ of degree $j$ corresponding to $j$ collinear points (which exist since $j \leq m$),
    then $M \neq 0$, and by the multinomial theorem, the coefficient of $M$ in $\ell^j$ is $j!$, which is obviously nonzero in $\ZZ$, and 
    nonzero modulo any prime $p > m$.  Thus, in particular, $\ell^{m}$ is zero modulo any prime $p$ if $p \leq m$.  Thus $A$ fails to have the SLP
    if $2 \leq \Char{K} \leq m$.  

    (iii) We first notice that $\frac{d-1}{m-1}   \geq m$. Indeed, it was noted above that each point, $Q$, of our linear space lies on $\frac{d-1}{m-1}$ lines. Among these are the $m$ lines joining $Q$ to a point on a line not containing $Q$, but there may be more lines through $Q$ that do not meet this line. In fact, apart from trivial cases (where there are too few points), the linear space is a projective plane if and only if $\frac{d-1}{m-1}   = m$.
    
    We now want to show that $A$ has the SLP in most of the other  characteristics. First of all, recall that, because of Remark \ref{deg2}, 
    the map $\times \ell^j$ between $A_i$ and $A_{i+j}$  has maximal rank if and only if the corresponding map between  graded components of 
    $C=k[x_1,\dots,x_{m}]/(x_1^2,\dots,x_{m}^2)$ does, for any $i\ge 2$ and any $j\ge 0$.  Again, we invoke Lemma~\ref{lem:squares}, 
    which precisely guarantees that the above maps have maximal rank for the algebra $C$ in characteristic zero and in all characteristics greater 
    than $m$. 
    
    Thus, assuming from now on that $\Char{K}=0$ or $\Char{K}>m$, it remains to prove that, when $i=1$, all maps 
    $\times \ell^j$ between $A_1$ and $A_{1+j}$ have maximal rank (i.e., they are all injective).  We first consider
    $j = m-1$.  In this case, the matrix $B$ for the multiplication map $\times \ell^{m-1}: A_1 \rightarrow A_{m}$ is 
    $(m-1)!$ times the incidence matrix $M$. Indeed, a basis of $A_1$ is given by the residue classes of the $d$ variables corresponding to the points in the linear space. The products of $m$ variables corresponding to points on a line form a basis of $A_m$. Hence, all non-squarefree summands in the expansion of $\ell^{m-1}$ are mapped to zero. The coefficient of the squarefree monomials appearing  in $\ell^{m-1}$ is $(m-1)!$. Using     
    $(m-1)! M = B$, it follows that if $\hbox{char } K > m-1$ then $M$ has maximal rank if and only if $B$ has maximal rank.  Furthermore, if $M^T M$ has maximal rank then $M$ has maximal rank. (Note that this last fact holds only when $\#L \geq \#P$, which in turn is true because of the assumption that $\#L \geq 2$.)
    
    The matrix $M^{\rm T}M$ is square with rows and columns index by the points of $(P, L)$.  In particular, the $(i,j)$-th entry
    is the number of lines containing both the $i$-th and the $j$-th points; i.e., it equals $\frac{d-1}{m-1}$ if $i = j$, and $1$ otherwise.
    
Thus $M^{\rm T}M$ has determinant
    \begin{equation*}
        \begin{split}
            \det(M^{\rm T}M)
                &= 
                \left| 
                    \begin{array}{cccc}
                        \frac{d-1}{m-1} & 1 & \cdots & 1 \\
                        1 & \frac{d-1}{m-1} & \cdots & 1 \\
                        \vdots &  & \ddots & \vdots \\
                        1 & 1 & \cdots & \frac{d-1}{m-1} \\
                    \end{array}
                \right| \\
                 &= 
                 \left| 
                     \begin{array}{ccccc}
                         \frac{d-1}{m-1} & 1 & 1 &  \cdots & 1 \\
                         1 - \frac{d-1}{m-1} & \frac{d-1}{m-1} - 1 & 0 & \cdots & 0 \\
                         \vdots &  & \ddots & \vdots \\
                         1 - \frac{d-1}{m-1} & 0 & 0 & \cdots & \frac{d-1}{m-1} - 1 \\
                     \end{array}
                 \right| \\
                 &= 
                 \left| 
                     \begin{array}{ccccc}
                         \frac{d-1}{m-1} + d - 1 & 1 & 1 &  \cdots & 1 \\
                         0 & \frac{d-1}{m-1} - 1 & 0 & \cdots & 0 \\
                         \vdots &  & \ddots & \vdots \\
                         0 & 0 & 0 & \cdots & \frac{d-1}{m-1} - 1 \\
                     \end{array}
                  \right| \\
                &= \left( \frac{d-1}{m-1} + d - 1 \right)\left( \frac{d-1}{m-1} - 1 \right)^{d-1} \\
                &= m \cdot \frac{d-1}{m-1} \cdot \left( \frac{d-m}{m-1} \right)^{d-1}.
        \end{split}
    \end{equation*}
    
    Hence $M^{\rm T}M$ has a nonzero determinant in $\ZZ$ and modulo a prime $p$ if $p > \max\{m, \frac{d-1}{m-1} \} = \frac{d-1}{m-1}$, 
    and so, in particular, $\times \ell^{m-1}: A_1 \rightarrow A_m$ is injective if $\Char{K} = 0$ or 
    $\Char{K} > \frac{d-1}{m-1}$.

    Finally, assume $\times \ell^j: A_1 \to A_{j+1}$ is not injective for some $1 \leq j \leq m-2$.  Thus $\ell^j \cdot x = 0$ for some nonzero linear form $x$.
    But this implies that $\ell^{m-1} \cdot x= \ell^{m-1-j} \cdot \ell^j \cdot x = 0$, contradicting the injectivity of $\times \ell^{m-1}$ 
    proven above in characteristic zero or characteristic $> \frac{d-1}{m-1}$. This completes the proof of this case.

    (iv) Suppose $A$ is associated to a finite projective plane of order $q = m - 1$.  In this case, the matrix $M$ is square
    and so its determinant is the square root of the determinant of $M^{\rm T}M$, i.e., $(q+1)q^{(q^2+q)/2}$.  Hence
    \[
        \det(B) = (q!)^{q^2+q+1} \cdot (q+1) \cdot q^{(q^2 + q)/2}.
    \]
    
    Thus, $\times \ell^q: A_1 \rightarrow A_{q+1}$ is an isomorphism if and only if $\Char{K} = 0$ or $\Char{K} > q+1 = m$, as desired.
\end{proof}

We note that the determinant of the matrix $M$ in the case of finite projective planes was given in  \cite[Lemma 29]{Kaa}.

\begin{rem}
We do not know if part (iii) of the above theorem is true even if we only assume $\hbox{char } K > m$. An interesting test case is the affine plane of order 4, where we have $d = 16$, $m = 4$, and $\frac{d-1}{m-1} = 5$. Thus if we take $\hbox{char } K = 5$, we see that $\det(M^T M) = 0$ so we might hope that in characteristic 5 $A$ does not have the SLP (thus providing a counterexample to the question just asked). But in fact we have verified experimentally that $A$ {\em does} have the SLP. This is because we only know that $M^T M$ having maximal rank implies that $M$ does, but not necessarily the converse. The converse can fail not only in characteristic $p$ but in fact also over the complex numbers. Interestingly, it is true over the real numbers, since for a real vector $x$, the number
$x^T M^T M x$ is the square of the length of $Mx$.
\end{rem}


\subsection{Nonequipointed linear spaces}

Now we consider the case when the linear space has lines of different sizes.

Our main result for nonequipointed linear spaces is that the WLP only holds for ``almost equipointed'' linear spaces.

\begin{thm}\label{almost-equipointed}
    Let $(P, L)$ be a linear space with $d = \#P$.  Set $n$ and $m$ to be the minimum and maximum, respectively, size of a line in $L$.
    Suppose $2 \leq n < m$.  Then the ring $A$ has the WLP if and only if (a) $m \leq 5$ or $n + 1 = m$, and (b) $\hbox{char } K > \left\lceil \frac{m}{2} \right\rceil$ or $\hbox{char } K = 0$.
\end{thm}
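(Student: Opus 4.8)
My plan is to fix the linear form $\ell = x_1 + \cdots + x_d$ and to determine, one degree at a time, exactly when each multiplication map $\times \ell \colon A_i \to A_{i+1}$ has maximal rank; conditions (a) and (b) should then fall out by assembling these. The extreme degrees are already controlled: the map $A_0 \to A_1$ is injective because $\ell \neq 0$; by Lemma~\ref{ciao}(i) the map $A_1 \to A_2$ has maximal rank (it is injective, as $d \le \binom{d}{2}$) if and only if $\Char K \neq 2$; and by Lemma~\ref{ciao}(ii) the top map $A_{m-1} \to A_m$ is surjective in every characteristic. So the real content lies in the intermediate maps $A_i \to A_{i+1}$ with $2 \le i \le m-2$.

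The key step I would carry out first is a line-by-line decomposition of these intermediate maps, extending the mechanism of Remark~\ref{deg2} to the nonequipointed case. For $i \ge 2$ every basis monomial of $A_i$ is supported on $i \ge 2$ collinear points and hence determines a unique line $\lambda \in L$, and multiplying by $\ell$ can only adjoin another point of that same line (all other products vanish, either from a repeated variable or from noncollinearity). Thus for $i \ge 2$ the map $\times \ell$ splits as a direct sum over the lines $\lambda \in L$ of the multiplication maps on the monomial complete intersections $C_{\#\lambda} = K[x_p : p \in \lambda]/(x_p^2 : p \in \lambda)$, whose graded pieces have dimension $\binom{\#\lambda}{i}$.

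Next I would invoke the elementary fact that a direct sum of linear maps has rank equal to the sum of the ranks of its summands. This shows $\times \ell \colon A_i \to A_{i+1}$ is of maximal rank if and only if (i) the summand dimensions all move in a single direction --- no line of size $\ge 2i+2$ (for which $\binom{k}{i} < \binom{k}{i+1}$) coexists with a line of size in $[i, 2i]$ (for which $\binom{k}{i} > \binom{k}{i+1}$) --- and (ii) each summand already has maximal rank. Requirement (i) is purely combinatorial in the multiset of line sizes, and running it over all $i \ge 2$ is what I expect to yield condition (a): a ``conflict'' is forced exactly when some line stays long enough to be increasing at a degree where a shorter line has already turned decreasing, and tracking the peaks of $\binom{n}{\bullet}$ and $\binom{m}{\bullet}$ should show this is dodgeable at every degree precisely when $m \le 5$ (so no line is ever increasing for $i \ge 2$) or $n+1 = m$. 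Requirement (ii) is where the characteristic enters: by Lemma~\ref{lem:squares}(i), $C_k$ has the WLP, i.e.\ all of its single-step maps have maximal rank, exactly when $\Char K = 0$ or $\Char K > \lceil k/2 \rceil$. Since $\lceil k/2 \rceil \le \lceil m/2 \rceil$ for every line size $k \le m$, a largest line is the bottleneck, giving condition (b); conversely, when $2 \le \Char K \le \lceil m/2 \rceil$ the central map of a size-$m$ summand fails maximal rank, and --- assuming (a) holds, so there is no conflict to hide behind --- the rank-sum formula forces the total map to fall short as well, so $A$ fails the WLP.

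The hard part will be the necessity of the combinatorial condition (a): proving that whenever $m \ge 6$ and $n + 1 < m$ there genuinely is a degree $i \ge 2$ at which a long line is strictly increasing while a short line is strictly decreasing. This requires a careful comparison of binomial coefficients near the middle degrees --- the parity of $m$ and the exact locations of the peaks of $\binom{n}{\bullet}$ and $\binom{m}{\bullet}$ both matter --- together with the observation (again from the rank-sum formula) that a single conflicting pair of summands cannot be repaired by the remaining ones, so the rank deficiency survives in \emph{every} characteristic. I would also check the low cases $m = 3, 4, 5$ directly to confirm that (a) imposes no restriction there, and verify cleanly the handoff between the decomposition (valid only for $i \ge 2$) and the extreme-degree maps governed by Lemma~\ref{ciao}.
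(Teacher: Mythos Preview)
Your proposal is correct and follows essentially the same route as the paper: handle the extreme degrees via Lemma~\ref{ciao}, decompose each intermediate map $\times\ell\colon A_i\to A_{i+1}$ (for $2\le i\le m-2$) as a block-diagonal/direct sum over the lines with each block being the corresponding map on $C_k=K[x_1,\dots,x_k]/(x_1^2,\dots,x_k^2)$, then invoke Lemma~\ref{lem:squares} for the characteristic condition~(b) and compare the peaks of the binomial sequences $\binom{k}{\bullet}$ to obtain the combinatorial condition~(a). Your write-up is in fact a bit more explicit than the paper's about why a direct sum has maximal rank exactly when there is no ``conflict'' among the block shapes (the rank-sum argument), which the paper leaves as an easy exercise.
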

\begin{proof}
    We must consider the maps $\times\ell: A_{i} \rightarrow A_{i+1}$.  Clearly, if $i = 0$, then the map always has maximal rank. 
    By  Lemma~\ref{ciao}, if $i = 1$, then the map has maximal rank if and only if $\Char K \neq 2$, and if $i = m-1$, then the map always has maximal rank.
    Thus we must consider the maps when $2 \leq i < m-1$.

    In each of these cases, the matrix $M_i$ for the map $\times\ell: A_{i} \rightarrow A_{i+1}$ has columns indexed by $i$-tuples of
    collinear points and rows indexed by $(i+1)$-tuples of collinear points.  Since $i \geq 2$, the matrix $M_i$ is a block diagonal matrix
    with blocks indexed by the lines in $L$. Let $B_c$ be the block associated to the line $c$ in $L$.  The matrix $B_c$ is 
    $\binom{\#c}{i+1} \times \binom{\#c}{i}$.  It is also the matrix for the map $\times\ell: C_i \rightarrow C_{i+1}$, where 
    $C = K[x_1, \ldots, x_{\#c}] / (x_1^2, \ldots, x_{\#c}^2)$. Invoking Lemma~\ref{lem:squares}, we see that such maps all have
    maximal rank if $\hbox{char } K > \left \lceil \frac{\#c}{2} \right \rceil$.

    Clearly, all of the blocks $B_c$ for all matrices $M_i$ will have maximal rank if and only if $\Char{K} = 0$ or 
    $\Char{K} > \left \lceil \frac{m}{2} \right\rceil$.  However, for $M_i$ we also need to have all of the blocks $B_c$ to be
    either simultaneously injective or simultaneously surjective.  It is an easy exercise in comparing the peaks of binomial coefficients
    to see that this can only occur if $m \leq 5$ or $n + 1 = m$.
\end{proof}

Based on experimental evidence,  we offer a conjecture as to the presence of the SLP.  

\begin{conj}
    Let $(P, L)$ be a linear space with $d = \#P$.  Set $n$ and $m$ to be the minimum and maximum, respectively, size of a line in $L$.
    Suppose $2 \leq n < m$.  Then the ring $A$ has the SLP if and only if $\Char{K} = 0$ or $\Char{K} > m$ and one of the following conditions holds:
    \begin{enumerate}
        \item $3 \leq m \leq 4$,
        \item $m = 5$ and every point is on a line of size at least $4$,
        \item $m \geq 6$ is even and $n+1 = m$.  
    \end{enumerate}
\end{conj}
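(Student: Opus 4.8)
The plan is to mirror the WLP analysis of Theorem~\ref{almost-equipointed}, but now for the maps $\times \ell^t$ between $A_i$ and $A_{i+t}$ for all $t \geq 1$. As in the WLP proof, the key structural observation is that for $i \geq 2$ the multiplication matrix decomposes as a block diagonal matrix with one block $B_c$ per line $c \in L$, where $B_c$ is exactly the matrix of $\times \ell^t$ on $C = K[x_1,\dots,x_{\#c}]/(x_1^2,\dots,x_{\#c}^2)$. Thus, for $i \geq 2$, Lemma~\ref{lem:squares} (SLP part) tells us each block has maximal rank precisely when $\Char K = 0$ or $\Char K > \#c$, so \emph{all} interior blocks are well-behaved if and only if $\Char K = 0$ or $\Char K > m$. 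This accounts for the leading characteristic hypothesis $\Char K > m$. The substantive content is then, on the one hand, to handle the boundary maps out of $A_0$ and $A_1$ (where the block structure breaks down, exactly as in the SLP analysis for the equipointed case in Theorem~\ref{main}(ii)--(iii)), and on the other hand to determine \emph{when the blocks can be simultaneously injective or simultaneously surjective}, which is where the combinatorial conditions (i)--(iii) must come from.

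First I would dispose of the maps from $A_0$: the form $\ell^t$ is nonzero for all $t \leq m$ exactly when $\Char K = 0$ or $\Char K > m$, by the multinomial-coefficient argument of Theorem~\ref{main}(ii) applied to a line of maximal size $m$. Next, for the maps $A_1 \to A_{1+t}$, I would argue injectivity follows from the analogous determinant/incidence-matrix analysis used in Theorem~\ref{main}(iii)--(iv), reducing to the top map $\times \ell^{m-1}$ and propagating injectivity downward via $\ell^{m-1} = \ell^{m-1-t}\cdot \ell^t$; here the nonequipointed structure must be used to verify the relevant incidence matrix still has full rank in characteristic $> m$. With the boundary and $i=0,1$ cases controlled, the heart of the argument is the following: an SLP map $\times\ell^t : A_i \to A_{i+t}$ with $i \geq 2$ has maximal rank if and only if every block $B_c$ is injective, or every block is surjective. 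A block for a line of size $k$ is injective precisely when $i + t \leq \lfloor k/2\rfloor$-type thresholds (the peak of the Hilbert function of $C$ lies at $\lceil k/2\rceil$), and surjective precisely when $i$ is past that peak. The requirement that the thresholds for the smallest line (size $n$) and largest line (size $m$) be compatible \emph{for every pair $(i,t)$} is exactly what forces conditions (i)--(iii).

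The main obstacle, and the delicate combinatorial step, is proving the trichotomy (i)--(iii) from this compatibility requirement. The idea is to track, for each line size $k$ with $n \leq k \leq m$, the ``injective range'' $\{(i,i+t) : i+t \leq \lceil k/2\rceil\}$ and the ``surjective range'' $\{(i,i+t): i \geq \lceil k/2\rceil\}$ of indices, and to ask when there is \emph{no} pair $(i,i+t)$, with $i\ge 2$, at which one line-size block wants to be injective while another wants to be surjective (which would make the total map non-maximal-rank). For $3 \leq m \leq 4$ (case (i)) the interior index set is essentially empty, so no obstruction arises. For $m = 5$ (case (ii)), the only possible clash occurs at the single middle map, and it is avoided precisely when no point is isolated on a short line---that is, every point lies on a line of size $\geq 4$, so that the peak locations align. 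For $m \geq 6$ (case (iii)), I expect to show that compatibility across the widest gap forces $m$ even (so the peak is a single degree $m/2$ rather than a plateau) and forces $n = m-1$ (so that the short and long peaks coincide at consecutive degrees); any larger gap $n \leq m-2$ or odd $m$ produces an index $(i,i+t)$ at which the largest block is still injective but a smaller block has already turned surjective, breaking maximal rank. Carrying out this peak-alignment bookkeeping carefully---verifying both that the stated conditions suffice and that their failure always exhibits an explicit bad map---is the crux; I would organize it by comparing, as a function of $t$, the injective cutoff $\lceil n/2\rceil$ for the smallest line against the surjective onset for the largest line, using the monotonicity of binomial coefficients to locate the first conflicting degree.
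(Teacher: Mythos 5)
First, a point of order: the statement you are proving is stated in the paper as a \emph{conjecture}, offered explicitly ``based on experimental evidence,'' and the paper contains no proof of it. So there is no argument of the authors to compare yours against; your proposal has to be judged as a self-contained research program, and as such it has a genuine gap. Your interior analysis (the block decomposition of $\times\ell^t\colon A_i\to A_{i+t}$ for $i\ge 2$, one block per line, each block a map on $C=K[x_1,\dots,x_k]/(x_1^2,\dots,x_k^2)$) is correct, but it cannot produce the trichotomy (i)--(iii), contrary to your claim that the ``peak-alignment bookkeeping'' is where these conditions come from. Make the block criterion precise: assuming $\Char{K}=0$ or $\Char{K}>m$, the block of a line of size $k$ is injective iff $2i+t\le k$ (or $i>k$) and surjective iff $2i+t\ge k$ (or $i+t>k$), so a directional clash at $(i,t)$ with $i\ge 2$ requires line sizes $k_1<2i+t<k_2$ with $i\le k_1$. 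Since $2i+t\ge 5$, one checks that such a clash exists for \emph{some} $t\ge1$ if and only if it already exists for $t=1$, i.e., if and only if $m\ge 6$ and $n\le m-2$. In other words, the interior analysis reproduces exactly condition (a) of Theorem~\ref{almost-equipointed} and nothing more: it is blind to the parity of $m$ in case (iii) and to the pointwise hypothesis in case (ii) (indeed, for $m=5$ there is \emph{never} an interior clash, so the condition ``every point is on a line of size at least $4$'' cannot arise there, and for $n+1=m$ there is no interior clash for any parity of $m$). All the content of the conjecture beyond the WLP theorem therefore lives in the maps out of $A_0$ and $A_1$, where the block structure breaks down --- precisely the part of your proposal that is least developed.

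And there your proposed treatment of $A_1$ fails concretely. The reduction to the top map $\times\ell^{m-1}\colon A_1\to A_m$ with downward propagation (as in Theorem~\ref{main}(iii)) is unavailable in the nonequipointed setting: $\dim_K A_m = L_m$, the number of lines of maximal size, which can be far smaller than $d=\#P$, so the top map is typically non-injective for dimension reasons and what maximal rank demands of it is \emph{surjectivity}; the trick $\ell^{m-1}\cdot x=\ell^{m-1-t}\cdot\ell^t\cdot x$ transfers only injectivity downward and gives nothing here. Moreover, the determinant computation of Theorem~\ref{main}(iii) used equipointedness essentially ($M^{\mathrm T}M$ had constant diagonal $\frac{d-1}{m-1}$ and constant off-diagonal $1$ because every point lies on the same number of lines and every pair on exactly one); the matrices you now need are incidence matrices of points versus collinear $(t+1)$-sets \emph{restricted to lines of size $\ge t+1$}, whose ranks in characteristic $0$ and $p>m$ are exactly the open combinatorial question (note $\ell^t x_p=0$ whenever every line through $p$ has size $\le t$, which is the true source of condition (ii), and the parity of $m$ in condition (iii) must likewise emerge from a rank computation for these nonsquare, nonregular incidence matrices that you have not supplied). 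Two smaller inaccuracies: the injectivity threshold for a block is $2i+t\le k$, not ``$i+t\le\lceil k/2\rceil$''; and your bookkeeping must run over the actual multiset of line sizes present (sizes strictly between $n$ and $m$ need not occur) and handle blocks with zero source or target. In short: the skeleton is reasonable and consistent with the paper's methods, but the decisive steps --- the maps from $A_1$ --- are exactly what the authors could not resolve, which is why the statement is a conjecture, and your proposal does not close that gap.
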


We close with an interesting example.

\begin{example}\label{n8}
    Let $(P, L)$ be the linear space  consisting of 2 disjoint lines with 8 points each, and all the other necessary lines of size $2$ between them.
    Then the associated artinian algebra $A = R/I_{\Delta}$ has $h$-vector 
    \[
        (1, 16, 120, 112, 140, 112, 56, 16, 2),
    \]
    which is nonunimodal, forcing $A$ to fail to have the WLP, which is consistent with Theorem~\ref{almost-equipointed}. More generally, and again consistently with Theorem~\ref{almost-equipointed}, an ``$(m,n)$-bipartite" linear space gives an algebra with $mn$ socle elements in degree 2 (coming from the lines with two points), so when $m \geq 6$ the multiplication from degree 2 to degree 3 fails to have maximal rank.
    \end{example}

\begin{rem}
The previous example confirms that the nature of arbitrary linear spaces can be very different from the subclass of projective planes, but also that they can produce some interesting $O$-sequences (or equivalently, some interesting $f$-vectors of simplicial complexes). Notice also that Example \ref{n8} is the only ``$(n,n)$-bipartite'' linear space to yield a nonunimodal $h$-vector: indeed, it is easy to see that, for any $n\neq 8$, a linear space consisting of two lines with $n$ points each plus all other possible lines containing only 2 points yields a unimodal $h$-vector. It is however possible to construct other interesting nonunimodal ``$(m,n)$-bipartite'' examples when $m\neq n$: for instance, $(m,n)=(7,8)$ gives the $h$-vector
$$(1,15,105,91,105,77,35,9,1).$$
\end{rem}


\end{document}